\newcommand*{\eqdef}{\mathrel{\vcenter{\baselineskip0.5ex \lineskiplimit0pt
                     \hbox{\scriptsize.}\hbox{\scriptsize.}}}%
                     =}
\def\R{\mathbb{R}}
\def\N{{\mathbb N}}
\def\Z{{\mathbb Z}}
\def\R{{\mathbb R}}
\newtheorem{lemma}{Lemma}
\newtheorem{theorem}{Theorem}
\newtheorem{proposition}{Proposition}
\newtheorem{definition}{Definition}
\newtheorem{corollary}{Corollary}
\newtheorem{remark}{Remark}
\title[{Multiplicity results for semilinear elliptic problems with Neumann conditions}]{Multiplicity results and qualitative properties for semilinear elliptic problems with Neumann boundary conditions}
\author[O. Agudelo]{Oscar Agudelo}
\address{\noindent O. Agudelo - NTIS, Department of Mathematics,
Zapadoceska Univerzita v Plzni, Plzen, Czech Republic.}
\email{oiagudel@ntis.zcu.cz}
\author[S. Correa]{Santiago Correa}
\address{\noindent S. Correa - Escuela de Matem\'{a}ticas, Universidad Nacional de Colombia, Sede Medell\'{i}n Medell\'{i}n, Colombia.}
\email{scorreac@unal.edu.co}
\author[D. Restrepo]{Daniel Restrepo}
\address{\noindent D. Restrepo - Escuela de Matem\'{a}ticas, Universidad Nacional de Colombia, Sede Medell\'{i}n Medell\'{i}n, Colombia.}
\email{daerestrepomo@unal.edu.co}
\author[C. V\'{e}lez]{Carlos V\'{e}lez}
\address{\noindent C. V\'{e}lez - Escuela de Matem\'{a}ticas, Universidad Nacional de Colombia, Sede Medell\'{i}n Medell\'{i}n, Colombia.}
\email{cauvelez@unal.edu.co}
\begin{document}
	
\maketitle
	
\begin{abstract}
In this paper we study multiplicity and qualitative behavior of solutions for semilinear elliptic problems with neumann boundary condition and asymptotically linear smooth nonlinearity. We provide sufficient conditions on the number of eigenvalues the derivative of the nonlinearity crosses to guarantee existence of at least five nontrivial solutions. The techniques we use are a combination of minimization, Leray-Schauder degree, Morse Theory and Reduction method a la Castro-Lazer.
\end{abstract}

\section{Introduction}

In this paper we study existence of multiple nontrivial solutions for the boundary value problem (BVP)
\begin{equation}
\label{NEUMANN}
\left\{
\begin{aligned}
-\Delta u =& f(u) &\hbox{in} \quad \,\,\,\,\,\Omega,\\
\frac{\partial u}{\partial \nu} = &0 &\hbox{on} \quad \partial \Omega,
\end{aligned}
\right.
\end{equation}
where $\Omega \subset \R^N$ ($N\geq 1$) is a smooth bounded domain with outer unit normal vector $\nu :\partial \Omega \to S^{N-1}$. The operator $\Delta:= \sum_{i=1}^N \partial^2_{x_i}$ is the Laplace operator and $f:\R\to \R$ is a continuously diffe\-rentiable function, asymptotically linear at $\pm \infty$ and satisfying some further assumptions, to be specified later.

\medskip
In order to describe the set of hypotheses on $f$, set
$$
f'(+\infty):= \lim \limits_{t\to + \infty} \frac{f(t)}{t} \quad \hbox{and} \quad f'(-\infty):= \lim \limits_{t\to - \infty} \frac{f(t)}{t},
$$
so that $f$ is asymptotically linear if and only if $f'(\pm \infty) \in  \R$.

\medskip
Next, let
$$
0=\mu_0 < \mu_1 \leq \mu_2 \leq \cdots \leq \mu_k \leq \mu_{k+1}\leq \cdots
$$
denote the sequence of eigenvalues of the linear BVP
\begin{equation}
\label{EIGENVALUENEUMANN}
\left\{
\begin{aligned}
-\Delta \varphi =& \mu \varphi  &\hbox{in} \quad \,\,\,\,\,\Omega,\\
\frac{\partial \varphi}{\partial \nu} = &0 &\hbox{on} \quad \partial \Omega .
\end{aligned}
\right.
\end{equation}

\medskip
The multiplicity results presented here deal with the case when $f'(\pm \infty)$ are finite and cross the same number of eigenvalues $\mu_k$. 

\medskip
Problem \eqref{NEUMANN} in the Dirichlet setting, has been extensively studied. We refer the reader for instance to \cite{BERESTICKYDEFIGUEIREDO81,
	CASTROLAZER79,COSTASILVA95,DEFIGUEIREDO87,
	HIRANONISHIMURA93,LANDESMANROBINSON95,LIANGSU09,LIUSHIBO09}
and references there in.

\medskip
As for the existence of nontrivial solutions of \eqref{NEUMANN}, we refer the reader to \cite{MAWHINWARDWILLEM86}, where the nonlinearity is assumed to be nondecreasing and $f'(\pm \infty) \leq \mu_1$ and to \cite{MAWHIN87} for a generali\-zation of the latter work.

\medskip 
In \cite{IANNACCINKASHAMA89,KUO96} existence of nontrivial solutions of \eqref{NEUMANN} in the resonant case is also treated using Landesman-Lazer type conditons.

\medskip
As for multiplicity of nontrivial solutions of \eqref{NEUMANN}, we refer the reader to \cite{LI03,LILI04,QIAN05}, where infinitely many solutions are obtained in the superlinear case and under either some oscillatory or symmetry assumptions on the nonlinearity. 

\medskip

In the resonant case, the Poincare inequality is not available and this makes the analysis of \eqref{NEUMANN} more involved. In \cite{FILIPPAKISPAPAGEORGIOU10}
the authors consider resonance respect to $\mu_0=0$ and under a sign condition on the nonlinearity existence of three nontrivial solutions is proved. In \cite{TANGWU03} the authors assume that the potential of the nonlinearity is anticoercive and prove the existence of two nontrivial solutions.

\medskip
In \cite{GASINSKIPAPAGEORGIOU12} the authors consider the case where resonance at zero and at infinity occur, but respect to different eigenvalues. In this work, a combination of critical point theory, Lyapunov-Schmidt reduction and Morse Theory methods is used to establish the existence of at least five nontrivial solutions of \eqref{NEUMANN}. The authors in \cite{GASINSKIPAPAGEORGIOU14} obtained two nontrivial solutions of \eqref{NEUMANN}, under similar hypotheses as in \cite{GASINSKIPAPAGEORGIOU12}, but allowing for resonance at zero and at infinity respect to the same eigenvalue.

\medskip

In this work we prove the existence of at least five nontrivial solutions to \eqref{NEUMANN} under different hypotheses to those assumed in \cite{GASINSKIPAPAGEORGIOU12} and \cite{GASINSKIPAPAGEORGIOU14} (see Sections 4 and 5 below for precise statements). We also provide, for a given positive integer $k$, conditions under which problem \eqref{NEUMANN} has at least $k$ nontrivial solutions (see Section 3 below for precise statement).

\bigskip
%
%Our main results reads as follows. 
%
%\begin{theorem}
%{\red Here the main result(s)}
%\end{theorem} 

We prove our results using a combination of variational techniques such as a reduction procedure alla Castro-Lazer, see \cite{CASTROLAZER79}, Morse Theory and computation of critical groups see \cite{CHANGKC93} and truncation methods. 
%A crucial part of the developments is to show that the critical groups are preserved through the reduction procedure. 

\medskip

%We remark that, although our set of hypotheses and methods resemble the ones assumed in \cite{GASINSKIPAPAGEORGIOU14} our present work differs in the following {\red description of the fundamental difference}.

\medskip

The paper is organized as follows. Section 2 contains some preliminary background intended to make the presentation as self-contained as possible. Section 3  is devoted to some technical lemmas about existence of solutions which are either local minima or of mountain pass type. Section 4 contains the degree computations of the solutions found in Section 3 and finally Section 5 contains the reduction procedure and the proof of our main result. 

\section{Background}

\section{Lemmas}

\begin{proposition}[Qualitative behaviour]\label{qualitative}
	Let $f:\R\times \overline{\Omega} \to\R$ be a continuous function, with $\frac{\partial f}{\partial t}$ continuous, let $\Omega$ be an open bounded subset of $\R^N$ with $C^1$-boundary and let $\hat{n}$ be the unit outward normal vector to $\Omega$. If $u$ is classical solution of the Neumann boundary problem
	\begin{equation}\label{eq Neumann problem}
	\begin{cases}
	-\Delta u=f(u) \quad \text{in} \; \Omega \, ,\\
	\frac{\partial u}{\partial \hat{n}} = 0    \qquad \hspace{2mm} \enspace \text{on}\, \, \: \partial \Omega,
	\end{cases}
	\end{equation}
	and $\alpha \in \R$ is such that $f(\alpha,x)\leq 0$ for each $x\in \overline{\Omega}$, then $\max_{\overline{\Omega}} u =\alpha$ only if $u$ is constant.
\end{proposition}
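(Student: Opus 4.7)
The plan is to reduce the statement to a standard strong maximum principle / Hopf boundary point argument applied to the non-negative function $v := \alpha - u$. Assuming $\max_{\overline{\Omega}} u = \alpha$, we have $v \geq 0$ on $\overline{\Omega}$. Using the mean value theorem together with the continuity of $\partial f/\partial t$, write
\begin{equation*}
f(u(x),x) - f(\alpha, x) \;=\; -\,c(x)\, v(x), \qquad c(x) := \tfrac{\partial f}{\partial t}(\xi(x), x),
\end{equation*}
where $\xi(x)$ lies between $u(x)$ and $\alpha$. Since $u$ is continuous on the compact set $\overline{\Omega}$ and $\partial f/\partial t$ is continuous, $c$ is bounded on $\overline{\Omega}$. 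Substituting into $-\Delta u = f(u,x)$ yields the linear elliptic inequality
\begin{equation*}
-\Delta v - c(x)\,v \;=\; -f(\alpha, x) \;\geq\; 0 \quad \text{in } \Omega,
\end{equation*}
together with $\partial v/\partial \hat n = -\partial u/\partial \hat n = 0$ on $\partial\Omega$.

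Next I would argue by contradiction: assume $u$ is non-constant, so $v \not\equiv 0$. Since $v \geq 0$, splitting $c = c^+ - c^-$ and absorbing $c^+ v \geq 0$ into the right hand side gives $-\Delta v + c^-(x)\,v \geq 0$ in $\Omega$, an elliptic inequality whose zeroth order coefficient is non-negative, so the strong maximum principle is applicable. If $v$ vanishes at an interior point of $\Omega$, the SMP forces $v \equiv 0$, contradicting non-constancy of $u$. Hence $v > 0$ in $\Omega$ and its nonempty zero set is contained in $\partial \Omega$. Picking $x_0 \in \partial\Omega$ with $v(x_0) = 0$, Hopf's boundary point lemma yields $\partial v/\partial \hat n(x_0) < 0$, which contradicts the Neumann condition. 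Therefore $u \equiv \alpha$.

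The main (minor) obstacle is ensuring that the SMP applies despite the fact that the linearizing coefficient $c(x)$ need not have a definite sign; as indicated, this is disposed of automatically by the non-negativity of $v$, which lets us replace $c$ by $-c^-$. A secondary caveat is the interior-ball condition needed to apply Hopf's lemma at $x_0$; this is standard for $C^2$ domains and can be pushed to $C^1$ boundaries in the versions found in standard references, so it is consistent with the regularity hypothesis in the statement.
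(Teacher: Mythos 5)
Your proposal is correct and follows essentially the same route as the paper: both linearize around $\alpha$ to obtain an elliptic differential inequality with a zeroth-order coefficient of the right sign (the paper by adding $mt$ with $m$ large so that $mt+f(t,x)$ is increasing, you by the mean value theorem plus absorbing $c^{+}v\geq 0$), and both then conclude via the strong maximum principle to exclude an interior extremum and Hopf's boundary point lemma to contradict the Neumann condition. The caveat you raise about the interior-ball condition for Hopf's lemma on a merely $C^1$ boundary applies equally to the paper's citation of Gilbarg--Trudinger Lemma~3.4, so it is not a gap relative to the paper's own argument.
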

\begin{proof}
	Let us suppose by contradiction that $u$ is nonconstant and that $\max_{\overline{\Omega}} u =\alpha$. By the boundedness of $\frac{\partial f}{\partial t}(s,x)$ in $s$ in the range of $u$ and $x\in \overline{\Omega}$  we can find $m>0$ such that the function $t\to mt+f(t,x)$ is increasing in that interval for each $x\in \overline{\Omega}$. Hence, it follows that 
	\begin{equation*}
	(\Delta -m)(u-\alpha)=m\alpha-(mu+f(u,x))\geq f(\alpha,x)+m\alpha-(mu+f(u,x))\geq 0
	\end{equation*}
	in $\Omega$. If we fix $x_0$ as a global maximizer of $u-\alpha$ on $\partial \Omega$, then $u(x_0)-\alpha= 0$. On the other hand, Theorem 3.2 of Gilbarg-Trudinger implies that $u-\alpha$ cannot achieve its maximum in the interior of $\Omega$, hence, Lemma 3.4 of the same reference implies that $\frac{\partial u}{\partial \hat{n}}(x_0)>0$, which contradicts the fact that $u$ is a solution of our boundary value problem.
\end{proof}
\begin{remark}
	We have a similar result if we suppose that $\min_{\overline{\Omega}} u=\alpha$ with $f(\alpha,x)\geq 0$.
\end{remark}
As a straightforward conclusion of the proposition (in the autonomous case) we have that if $u$ is a non-constant solution of our problem then $f(\max_{\overline{\Omega}} u)>0$ and that $f(\min_{\overline{\Omega}} u)<0$.
\begin{remark}\label{eigenvalues}
	Let us recall some important properties of the spectrum of the Laplacian with Neumann boundary condition (see Santiago Correa thesis).
	\begin{itemize}
		\item $\lambda_1=0$ is the first (principal) eigenvalue of this operator and it is simple.
		\item The eigenspace associated with $\lambda_1$ is formed exclusively by constant functions.
		\item If we denote $E_i$ the eigenspace associated with the eigenvalue $\lambda_i$ we have that for $k\geq 1$ and every $v\in Y:=\bigoplus_{i=1}^k E_i$ it holds
		\begin{equation*}
		\Vert \nabla v\Vert_{L^2(\Omega)}^2\leq \lambda_k \Vert v\Vert_{L^2(\Omega)}^2.
		\end{equation*}
		On the other hand, for every $v\in Y^\bot$ we have that
		\begin{equation*}
		\Vert \nabla v\Vert_{L^2(\Omega)}^2\geq \lambda_{k+1} \Vert v\Vert_{L^2(\Omega)}^2.
		\end{equation*}
	\end{itemize}
\end{remark}

The following proposition proves the Palais-Smale condition for some type of assymptotically linear reactions.
\begin{proposition}[Palais-Smale]
	Let $\Omega$ be a smooth domain in $\R^N$ and let $f:\R\to \R$ be a continuous function such that
	\begin{itemize}
		\item $f'(\infty):=\lim\limits_{|t|\to \infty} \frac{f(t)}{t}\in (\lambda_k, \lambda_{k+1})$ where $\lambda_k$ and $\lambda_{k+1}$ are two different consecutive eigenvalues of the Laplacian with homogeneous Neumann boundary condition on $\partial \Omega$.
	\end{itemize}
	Then, the energy functional associated to $f$ satisfies the Palais-Smale condition.
\end{proposition}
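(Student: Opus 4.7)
The plan is to verify the Palais--Smale condition for the energy functional
$$
J(u) = \frac{1}{2}\int_\Omega |\nabla u|^2\,dx - \int_\Omega F(u)\,dx,
\qquad F(t)=\int_0^t f(s)\,ds,
$$
defined on $H^1(\Omega)$, whose critical points are the weak solutions of \eqref{eq Neumann problem} (the Neumann condition appears naturally when integrating by parts). The strategy splits into two steps: (i) show that any Palais--Smale sequence $\{u_n\}\subset H^1(\Omega)$ is bounded; (ii) extract a strongly convergent subsequence. Step (ii) is routine once (i) holds: by Rellich $u_n\rightharpoonup u$ in $H^1$ and $u_n\to u$ in $L^2$, and since $f$ has linear growth ($|f(t)|\le C(1+|t|)$ follows from $f'(\infty)\in\R$) the Nemytskii operator $u\mapsto f(u)$ is bounded $L^2\to L^2$, so $\int_\Omega f(u_n)(u_n-u)\to 0$; combined with $\langle J'(u_n),u_n-u\rangle\to 0$ and weak convergence this gives $\|\nabla(u_n-u)\|_{L^2}\to 0$, hence $u_n\to u$ in $H^1(\Omega)$.

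The main effort is the boundedness. Argue by contradiction: assume $t_n:=\|u_n\|_{H^1}\to\infty$ and set $v_n:=u_n/t_n$, so $\|v_n\|_{H^1}=1$ and, up to subsequence, $v_n\rightharpoonup v$ in $H^1(\Omega)$, $v_n\to v$ in $L^2(\Omega)$, and $v_n\to v$ a.e. Decompose
$$
f(t)=f'(\infty)\,t + h(t),\qquad h(t)/t\to 0\text{ as }|t|\to\infty.
$$
For any $\varepsilon>0$ there is $C_\varepsilon>0$ with $|h(s)|\le C_\varepsilon+\varepsilon|s|$ on $\R$, so
$$
\Bigl\|\frac{h(u_n)}{t_n}\Bigr\|_{L^2(\Omega)}\le \frac{C_\varepsilon|\Omega|^{1/2}}{t_n}+\varepsilon\|v_n\|_{L^2}\longrightarrow \varepsilon,
$$
which, letting $\varepsilon\downarrow 0$, shows $h(u_n)/t_n\to 0$ in $L^2(\Omega)$. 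Dividing the identity $\langle J'(u_n),\varphi\rangle\to 0$ by $t_n$ and passing to the limit yields, for every $\varphi\in H^1(\Omega)$,
$$
\int_\Omega \nabla v\cdot\nabla\varphi\,dx - f'(\infty)\int_\Omega v\,\varphi\,dx = 0,
$$
i.e.\ $v$ is a weak Neumann eigenfunction of $-\Delta$ with eigenvalue $f'(\infty)$. Since $f'(\infty)\in(\lambda_k,\lambda_{k+1})$ is not in the spectrum, $v\equiv 0$.

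To reach a contradiction, test $\langle J'(u_n),v_n\rangle/t_n\to 0$: this gives $\int_\Omega|\nabla v_n|^2 = f'(\infty)\int_\Omega v_n^2 + \int_\Omega (h(u_n)/t_n)v_n + o(1)$. The last two terms vanish as $n\to\infty$ (the first because $v_n\to 0$ in $L^2$, the second by the bound just established), so $\|\nabla v_n\|_{L^2}\to 0$. Together with $\|v_n\|_{L^2}\to 0$, this forces $\|v_n\|_{H^1}\to 0$, contradicting $\|v_n\|_{H^1}=1$. Hence $\{u_n\}$ is bounded and step (ii) concludes the proof.

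The main obstacle is precisely the boundedness step: because we work under Neumann boundary conditions there is no Poincar\'e inequality at our disposal (constants lie in the principal eigenspace), and the subquadratic correction $F(u)-\tfrac12 f'(\infty)u^2$ need not be coercive. The argument therefore crucially exploits the nonresonance hypothesis $f'(\infty)\notin\{\lambda_k\}_{k\ge 1}$ to rule out nontrivial weak limits of the rescaled sequence, and the linear-growth control on $h$ to upgrade this to strong convergence to zero.
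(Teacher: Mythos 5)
Your proof is correct, but the boundedness step is argued by a genuinely different route than the paper's. You use the classical rescaling/contradiction argument: normalize $v_n=u_n/\|u_n\|_{H^1}$, show the weak limit $v$ would be a Neumann eigenfunction with eigenvalue $f'(\infty)$, conclude $v=0$ from nonresonance, and then derive $\|v_n\|_{H^1}\to 0$ to contradict the normalization. The paper instead gives a direct a priori estimate: it splits $u_n=w_n+v_n$ along $Y=\bigoplus_{i=1}^k E_i$ and its orthogonal complement, tests $DJ(u_n)$ against $w_n-v_n$, and uses the two Poincar\'e-type inequalities $\|\nabla v\|_{L^2}^2\le\lambda_k\|v\|_{L^2}^2$ on $Y$ and $\|\nabla w\|_{L^2}^2\ge\lambda_{k+1}\|w\|_{L^2}^2$ on $Y^\perp$ to make the quadratic form $\int|\nabla w_n|^2-f'(\infty)w_n^2-\left(|\nabla v_n|^2-f'(\infty)v_n^2\right)$ coercive, absorbing the remainder via $|h(t)|\le\tfrac{c}{2}|t|+a$. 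The trade-off: the paper's argument yields an explicit quantitative bound on $\|u_n\|_{H^1}$ and uses the full strength of the hypothesis that $f'(\infty)$ lies strictly between \emph{consecutive} eigenvalues (needed for the sign-definiteness on each subspace); your argument is softer and in fact only needs $f'(\infty)$ to avoid the spectrum, at the cost of being non-quantitative and relying on compactness of the embedding $H^1(\Omega)\hookrightarrow L^2(\Omega)$. Your step (ii), extracting a strongly convergent subsequence, is the same ``identity minus compact'' observation the paper invokes. Both proofs are sound.
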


\begin{proof}
	Since $f$ is assymptotically linear it follows that the energy functional associated to $f$, which we will call $J$, is well defined and its derivative is of the form ``identity minus compact'' (this follows from adding and substracting $u^2$ to the energy functional). This last observation implies that, in order to check the PS condition, it suffices to check that any PS sequence is bounded (also see Variational methods class-notes). \\
	Let $\{u_n\}_{n\in \N}$ be a (PS)-sequence for $J$, keeping the notation of the previous remark let us write $u_n=w_n+v_n$ with $w_n\in Y^\bot$ and $v_n\in Y$. Since  $\{u_n\}_{n\in \N}$ be a (PS)-sequence we have that for $n$ large
	\begin{equation*}
	\int_{\Omega}|\nabla w_n|^2-\int_{\Omega}|\nabla v_n|^2\leq \Vert w_n+v_n\Vert_{H^1(\Omega)}+\int_{\Omega}f(w_n+v_n)(w_n-v_n).
	\end{equation*}
	Given that $f$ is assymptotically linear there exist a continuous function $h$ such that $f(t)=f'(\infty)t+h(t)$ with $h(t)=o(t)$ as $|t|\to \infty$. This consideration implies that
	\begin{equation*}
	\int_{\Omega}\left(|\nabla w_n|^2-f'(\infty)w_n^2\right)-\int_{\Omega}\left(|\nabla v_n|^2-f'(\infty)v_n^2\right)\leq \Vert w_n+v_n\Vert_{H^1(\Omega)}+\int_{\Omega}h(w_n+v_n)(w_n-v_n).
	\end{equation*}
	Decomposing $v_n=c_n+y_n$, where $c_n$ is the component of $v_n$ in $E_1$ and using the previous remark it follows that 
	\begin{align*}
	\Vert w_n\Vert_{H^1(\Omega)}^2\left(1-\frac{f'(\infty)}{\lambda_{k+1}}\right)\frac{\lambda_{k+1}}{\lambda_{k+1}+1}+\Vert y_n\Vert_{H^1(\Omega)}^2\left(\frac{f'(\infty)}{\lambda_k}-1\right)\frac{\lambda_{2}}{\lambda_{2}+1} +\Vert c_n\Vert_{H^1(\Omega)}^2&\leq \Vert w_n+v_n\Vert_{H^1(\Omega)}\\
	&+\int_{\Omega}h(w_n+v_n)(w_n-v_n).
	\end{align*}
	Hence, by orthogonality there exist $c>0$ such that
	\begin{equation*}
	c\Vert w_n+v_n\Vert_{H^1(\Omega)}^2 \leq \Vert w_n+v_n\Vert_{H^1(\Omega)}+\int_{\Omega}h(w_n+v_n)(w_n-v_n).
	\end{equation*}
	The properties of $h$ implies that there exist $a>0$ such that $|h(t)|\leq \frac{c}{2}|t|+a$. Therefore, from the Cauchy-Schwartz inequality (an by orthogonality again) it follows that
	\begin{equation*}
	\int_{\Omega}h(w_n+v_n)(w_n-v_n)\leq \Vert w_n+v_n\Vert_{H^1(\Omega)}\Vert h(w_n+v_n)\Vert_{L^2(\Omega)}\leq \frac{c}{2}\Vert w_n+v_n\Vert_{H^1(\Omega)}^2+a|\Omega|^\frac{1}{2}\Vert w_n+v_n\Vert_{H^1(\Omega)}.
	\end{equation*}
	Finally, combining the last two inequalities we get that our (PS)-sequence is bounded.

\end{proof}

\begin{proposition}[Positive and negative solution]
	Let $f:\R\to \R$ be a continuous function such that
	\begin{itemize}
		\item $f(0)=0$.
		\item $f$ is differentiable at $0$ and $f'(0)<0$.
		\item $f'(\infty):=\lim\limits_{|t|\to \infty} \frac{f(t)}{t}\in (\lambda_k, \lambda_{k+1})$ where $\lambda_k$ and $\lambda_{k+1}$ are two different  eigenvalues of the Laplacian with homogeneous Neumann boundary condition on $\partial \Omega$.
	\end{itemize}
	Then the problem \ref{eq Neumann problem} has at least one negative solution and at least one positive solution.
\end{proposition}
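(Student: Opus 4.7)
The plan is to construct the positive solution via a truncation of $f$ combined with the mountain pass theorem; the negative solution then follows by a symmetric argument. Introduce the truncated reaction
\[
g(t)=\begin{cases} f(t), & t\geq 0,\\ f'(0)\,t, & t<0,\end{cases}
\]
which is continuous (since $f(0)=0$), asymptotically linear with $\lim_{t\to+\infty}g(t)/t=f'(\infty)\in(\lambda_k,\lambda_{k+1})$ and $\lim_{t\to-\infty}g(t)/t=f'(0)<0$. Let $G(t)=\int_0^t g(s)\,ds$ and consider
\[
J(u)=\tfrac12\int_\Omega|\nabla u|^2\,dx-\int_\Omega G(u)\,dx,\qquad u\in H^1(\Omega).
\]
Since $f'(0)<0=\lambda_1$ lies below the Neumann spectrum and $f'(\infty)$ lies inside the gap $(\lambda_k,\lambda_{k+1})$, one can adapt the argument of the previous proposition (splitting the PS sequence into its positive and negative parts and estimating each piece via the corresponding asymptotic slope) to conclude that $J$ satisfies the Palais--Smale condition on $H^1(\Omega)$.

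Next, I would verify the mountain pass geometry of $J$. Because $g(t)=f'(0)t+o(t)$ as $t\to 0$, Taylor expansion gives $G(t)=\tfrac12 f'(0)t^2+o(t^2)$ uniformly for $|t|$ small, and hence
\[
J(u)\geq \tfrac12\|\nabla u\|_{L^2}^2+\tfrac{|f'(0)|}{2}\|u\|_{L^2}^2+o(\|u\|_{H^1}^2),
\]
so there exist $r,\delta>0$ with $J(u)\geq\delta$ whenever $\|u\|_{H^1}=r$. On the other hand, $f'(\infty)>\lambda_k\geq\lambda_1=0$ forces $G(c)\sim\tfrac12 f'(\infty)c^2\to +\infty$ as $c\to+\infty$, so for $c$ sufficiently large the constant function $u\equiv c$ satisfies $J(c)=-|\Omega|\,G(c)<0$. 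The mountain pass theorem then produces a critical point $u_0\in H^1(\Omega)$ with $J(u_0)\geq\delta>0$; in particular $u_0\not\equiv 0$.

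Finally, I would show that $u_0$ is a strictly positive solution of the original Neumann problem. Testing $J'(u_0)=0$ against $u_0^-\in H^1(\Omega)$ and using $\nabla u_0\cdot\nabla u_0^-=-|\nabla u_0^-|^2$ together with $g(u_0)u_0^-=-f'(0)(u_0^-)^2$ a.e., one obtains
\[
\int_\Omega|\nabla u_0^-|^2\,dx+|f'(0)|\int_\Omega(u_0^-)^2\,dx=0,
\]
forcing $u_0^-\equiv 0$, hence $u_0\geq 0$ on $\Omega$. Consequently $g(u_0)=f(u_0)$ and $u_0$ solves \eqref{eq Neumann problem} nontrivially. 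Writing $f(u_0)=f'(\xi)u_0$ via the mean value theorem (with $\xi$ between $0$ and $u_0$), the equation becomes $-\Delta u_0+Mu_0=(M+f'(\xi))u_0\geq 0$ for $M$ sufficiently large, and the strong maximum principle together with Hopf's lemma (which rules out a boundary zero against the Neumann condition) gives $u_0>0$ on $\overline{\Omega}$. A negative solution is produced analogously, replacing $g$ by $\widetilde g(t)=f(t)$ for $t\leq 0$ and $\widetilde g(t)=f'(0)t$ for $t>0$. The principal technical obstacle in this plan is the Palais--Smale verification for $J$: the previous proposition addresses only the case of a common asymptotic slope inside a spectral gap, whereas here the slopes at $\pm\infty$ differ and one of them lies below the Neumann spectrum, so a refined decomposition of the PS sequence is required.
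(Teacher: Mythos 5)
Your plan coincides in all essentials with the paper's proof: the same truncation $g=f^{+}$, the Palais--Smale condition reduced to boundedness of (PS)-sequences and obtained by splitting $u_n$ into $u_n^{-}$ (controlled by testing $DJ^{+}(u_n)$ against $u_n^{-}$ and using $f'(0)<0$) and $u_n^{+}$ (controlled by the spectral-gap decomposition of the preceding proposition), the mountain-pass geometry coming from a strict local minimum at $0$ together with $J(c)\to-\infty$ along large positive constants, and nontriviality from the positive critical level. The step you defer as the ``principal technical obstacle'' is carried out in the paper exactly along the lines you sketch, so there is no real obstruction there. The one place you genuinely diverge is the positivity of the mountain-pass solution: you test $J'(u_0)=0$ against $u_0^{-}$ to conclude $u_0\ge 0$ and then upgrade to strict positivity via the strong maximum principle and Hopf's lemma, whereas the paper never extracts $u_0\ge0$ directly and instead invokes Proposition \ref{qualitative} (its tailored Neumann maximum principle) together with a case distinction between constant and nonconstant solutions. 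Your route is arguably cleaner and more self-contained; note only that the mean-value step $f(u_0)=f'(\xi)u_0$ uses differentiability of $f$ away from $0$, which this proposition does not assume. Replace it by $f(u_0)=c(x)u_0$ with $c(x)=f(u_0(x))/u_0(x)$, which is bounded because $f(t)/t$ is bounded under the stated hypotheses; the maximum principle argument then goes through unchanged.

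One step is glossed over and needs to be filled in: the inequality $J(u)\ge\tfrac12\Vert\nabla u\Vert_{L^2}^2+\tfrac{|f'(0)|}{2}\Vert u\Vert_{L^2}^2+o(\Vert u\Vert_{H^1}^2)$ does not follow directly from the pointwise expansion $G(t)=\tfrac12 f'(0)t^2+o(t^2)$, because a function with small $H^1$-norm can still take large values on a set of small measure, where the Taylor remainder gives no control. The paper closes this by combining the local expansion on $\{|u|<\delta'\}$ with a global quadratic bound $G(t)\le\tfrac{c'}{2}t^2$ on $\{|u|\ge\delta'\}$, and then estimating $\int_{\{|u|\ge\delta'\}}u^2$ via the critical Sobolev embedding, H\"older's inequality and Chebyshev's inequality, which yields a term that is $o(\Vert u\Vert_{H^1}^2)$. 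You should include such an argument (or an equivalent interpolation) to make the mountain-pass geometry rigorous; with that addition the proof is complete.
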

\begin{proof}
	Let us define the continuous function $f^+$ as $f(t)$ for $t\geq0$ and $f'(0)t$ for $t<0$, and let us define $f^-$ as $f(t)$ for $t\leq0$ and $f'(0)t$ for $t<0$. Hence, since $f$ is assymptotically linear it follows that both of these functions are subcritical, implying that the corresponding energy functionals $J^{+}$ and $J^{-}$ are well defined and that their derivatives are of the form ``identity minus compact''. This last observation implies that, in order to check the PS condition, it suffices to check that any PS sequence is bounded (also see Variational methods class-notes).\\
	
	Let us prove that $J^+$ satisfies the PS condition, the proof for $J^-$ is completely analogous. Let $\{u_n\}_{n\in \N}$ be a (PS)-sequence for $J^+$. If we define $u_n^-:=\min\{u_n,0\}$ (the negative part of $u_n$), it follows that, for $n$ large
	\begin{equation*}
	DJ^+(u_n)(u_n^-)=\int_{\Omega}|\nabla u_n^-|^2-f'(0)\int_{\Omega}(u_n^-)^2\leq \Vert u_n^-\Vert_{H^1(\Omega)}.
	\end{equation*}
	Since $f'(0)<0$ the last inequality proves that $\{u_n^-\}_{n\in \N}$ is bounded in $H^1(\Omega)$ by a constant $K>0$. Fixing the notation $u_n^+:=\max\{u_n,0\}$ we can proceed as in the previous proposition and consider the decomposition  $u_n^+=w_n+v_n$ with $w_n\in Y^\bot$ and $v_n\in Y$, where $Y$ is defined as in \textit{Remark} \ref{eigenvalues}. Hence, since our sequence satisfies the PS conditions, for $n$ large we have that
	\begin{equation*}
	DJ^+(u_n)(w_n-v_n)=DJ^+(u_n^+)(w_n-v_n)+DJ^+(u_n^-)(w_n-v_n)\leq \Vert w_n+v_n\Vert_{H^1(\Omega)}.
	\end{equation*}
	By the boundedness of  $\{u_n^-\}_{n\in \N}$ in $H^1(\Omega)$ we can use the last estimation and Cauchy-Schwartz inequality to conclude that 
	\begin{equation*}
	\int_{\Omega}|\nabla w_n|^2-\int_{\Omega}|\nabla v_n|^2\leq \Vert w_n+v_n\Vert_{H^1(\Omega)}+\int_{\Omega}f(w_n+v_n)(w_n-v_n)+K(1+|f'(0)|)\Vert w_n+v_n\Vert_{H^1(\Omega)}
	\end{equation*}
	and hence
	\begin{equation*}
	\int_{\Omega}|\nabla w_n|^2-\int_{\Omega}|\nabla v_n|^2\leq c\Vert w_n+v_n\Vert_{H^1(\Omega)}+\int_{\Omega}f(w_n+v_n)(w_n-v_n)
	\end{equation*}
	In this point we can repeat the same arguments used in the previous proposition to conclude that $\{u_n^+\}_{n\in \N}$ is also bounded in $H^1(\Omega)$.\\
	
	The next part of the proof consists in showing that $0$ is a strong minimum of the functional $J^+$. First of all, notice that if define $F^+(t):=\int_{0}^{t}f(s)ds$ we can apply Taylor's theorem to $F^+$ to get that for some $\delta>0$ and $|t|<\delta$ it holds
	\begin{equation*}
	F^+(t)=F^+(0)+f(0)t+\frac{f'(0)}{2}t^2+o(t^2)=\left(\frac{f'(0)}{2}+\frac{o(t^2)}{t^2}\right)t^2.
	\end{equation*}
	
	Therefore, there exists $0<\delta'<\delta$ such that for $|t|<\delta'$ we have $F^+(t)\leq \frac{c}{2}t^2$ for some $c<0$. On the other hand, our hypotheses about $f$ implies that the function $\frac{f^+(t)}{t}$ is bounded by a positive constant, thus there exists $c'>0$ such that $F^+(t)\leq \frac{c'}{2}t^2$ for every $t$.\\
	
	Keeping in mind these considerations we can estimate $J^-(u)$ for $u$ small in $H^1(\Omega)$ in the following way
	\begin{align*}
	J_\alpha^-(\alpha+u)\geq \frac{1}{2}\left(\int_{\Omega} |\nabla (u+\alpha)|^2-c\int_{\{|u|<\delta'\}}(u+\alpha)^2-c'\int_{\{|u|\geq\delta'\}}(u+\alpha)^2\right)=\frac{1}{2}\left(\int_{\Omega} |\nabla u|^2-c\int_{\Omega}u^2+(c-c')\int_{\{|u|\geq\delta'\}}u^2\right)
	\end{align*}
	Using the critical continuous embedding, Holder inequality with $p=\frac{2^*}{2}$ and $q=\frac{2^*}{2^*-2}$ and Chebyshev inequality it follows that
	\begin{equation*}
	\int_{\{|u|\geq\delta'\}}u^2\leq C_N\Vert u \Vert_{H^1(\Omega)}^2\left(\frac{\Vert u\Vert_{L^2(\Omega)}}{\delta'^2}\right)^\frac{1}{q}
	\end{equation*}
	Thus, there exists a positive constant $C>0$ and $\varepsilon>0$ such that for $\Vert u \Vert_{H^1(\Omega)}\leq\varepsilon$
	\begin{equation*}
	J^+(u)\geq C \Vert u \Vert_{H^1(\Omega)}^2
	\end{equation*}
	Now, to check the last hypothesis of the mountain pass theorem, let us notice that since $f'(\infty)>0$ there exists constants $a>0$ and $b$ such that, for $t>0$, $F^+(t)>at^2+b$. Hence, let us notice that the sequence of constant functions $\{n\}_{n\in\N}$ satisfies
	\begin{equation*}
	J^+(n)=-\int_{\Omega}F(n)\leq |\Omega|(-an^2+b)\to -\infty,\quad n\to \infty.
	\end{equation*}
	Finally, by the mountain pass theorem $J^+$ has a critical point $\omega$ such that $J(\omega)>0$, implying that $\omega$ is not identically 0. By our assumptions we can conclude that $\omega$ is a classical solution of the problem
	\begin{equation*}
	\begin{cases}
	-\Delta u=f^+(u) \quad \text{in} \; \Omega \, ,\\
	\frac{\partial u}{\partial \hat{n}} = 0    \qquad \hspace{2mm} \enspace \text{on}\, \, \: \partial \Omega,
	\end{cases}
	\end{equation*}
	Hence, if $\omega$ is constant then $\omega$ has to be positive, since $f^+$ has not negative zeroes. If $\omega$ is nonconstant we can apply \textit{Proposition} \ref{qualitative} to conclude that $\omega$ is positive since $f^+(t)>0$ for $t<0$. Finally, since $f^+(t)=f(t)$ for $t>0$ the result follows.
\end{proof}

The next proposition generalizes the previous result for any $\alpha \in \R$, we prove the symmetric case that we did not prove in the last case for the sake of completeness.

\begin{proposition}[Generalization of positive and negative solutions]
	Let $\alpha \in \R$ and let $f:\R\to \R$ be a continuous function such that
	\begin{itemize}
		\item $f(\alpha)=0$.
		\item $f$ is differentiable at $\alpha$ and $f'(\alpha)<0$.
		\item $f'(\infty):=\lim\limits_{|t|\to \infty} \frac{f(t)}{t}\in (\lambda_k, \lambda_{k+1})$ where $\lambda_k$ and $\lambda_{k+1}$ are two different eigenvalues of the Laplacian with homogeneous Neumann boundary condition on $\partial \Omega$.
	\end{itemize}
	Then the problem \ref{eq Neumann problem} has a solution $\omega_1$ such that $\min_{\overline{\Omega}} \omega_1 >\alpha$ and a solution $\omega_2$ such that $\max_{\overline{\Omega}} \omega_2 <\alpha$
\end{proposition}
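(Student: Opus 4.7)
The plan is to reduce to the previous proposition by a translation. Setting $v := u - \alpha$ and $g(t) := f(t+\alpha)$, the Neumann problem $-\Delta u = f(u)$ becomes $-\Delta v = g(v)$ with Neumann boundary condition, and the hypotheses transfer immediately: $g(0) = f(\alpha) = 0$, $g'(0) = f'(\alpha) < 0$, and
\[
\lim_{|t|\to\infty}\frac{g(t)}{t} \;=\; \lim_{|t|\to\infty}\frac{f(t+\alpha)}{t+\alpha}\cdot\frac{t+\alpha}{t} \;=\; f'(\infty) \;\in\; (\lambda_k,\lambda_{k+1}).
\]
Thus it suffices to produce a solution $v_1$ of $-\Delta v_1 = g(v_1)$ with $\min v_1 > 0$ and a solution $v_2$ with $\max v_2 < 0$, for then $\omega_i := v_i + \alpha$ will satisfy the claim.

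The existence of $v_1$ is exactly the content of the previous proposition applied to $g$. To sharpen the weak bound $v_1 \geq 0$ to a strict one I would invoke Proposition \ref{qualitative}: if $\min v_1 = \beta \leq 0$, the truncated reaction $g^+$ (equal to $g(t)$ for $t \geq 0$ and to $g'(0)t$ for $t < 0$) satisfies $g^+(\beta) = g'(0)\beta \geq 0$, so $v_1 \equiv \beta$. Since $0$ is the only nonpositive zero of $g^+$, this forces $v_1 \equiv 0$, contradicting the strict positivity of the mountain pass critical value; hence $\min v_1 > 0$ and in particular $v_1$ solves $-\Delta v_1 = g(v_1)$ itself.

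The existence of $v_2$ is the symmetric case omitted in the previous proof, which I carry out here for completeness. Define $g^-(t) := g(t)$ for $t \leq 0$ and $g^-(t) := g'(0)t$ for $t > 0$, and repeat the mountain pass scheme with the roles of the positive and negative parts interchanged: testing a Palais--Smale sequence with the \emph{positive} part $u_n^+$ yields the required bound (thanks to $g'(0) < 0$), the local-minimum estimate $J^-(v) \geq C\|v\|_{H^1}^2$ for small $v$ is verbatim, and since $g'(\infty) > 0$ the sequence of negative constants $\{-n\}_{n\in\N}$ drives $J^-$ to $-\infty$. The Mountain Pass Theorem then produces a nontrivial critical point $v_2$ of $J^-$ with $J^-(v_2) > 0$, and the same qualitative argument applied to $\max v_2$ yields $\max v_2 < 0$, ensuring $g^-(v_2) = g(v_2)$ so that $v_2$ solves the translated problem. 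The main obstacle is the mirrored Palais--Smale verification, but it is a word-for-word transcription of the argument in the previous proposition, after which Proposition \ref{qualitative} delivers the strict sign.
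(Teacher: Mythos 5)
Your proposal is correct and follows essentially the same route as the paper: truncate the nonlinearity at the level $\alpha$, verify the Palais--Smale condition by testing with the signed parts of $u-\alpha$, obtain a mountain pass critical point above the strict local minimum, and invoke Proposition \ref{qualitative} to get the strict bound $\min\omega_1>\alpha$ (resp.\ $\max\omega_2<\alpha$). The only cosmetic difference is that you first translate by $\alpha$ so the previous proposition can be quoted verbatim for one of the two solutions, whereas the paper carries $\alpha$ through the truncations $f_\alpha^{\pm}$ and writes out the omitted symmetric case directly.
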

\begin{proof}
	Let us define the continuous function $f_\alpha^+$ as $f(t)$ for $t\geq \alpha$ and $f'(\alpha)(t-\alpha)$ for $t<\alpha$ and $f_\alpha^-$ as $f(t)$ for $t\leq \alpha$ and $f'(\alpha)(t-\alpha)$ for $t>\alpha$. Hence, since $f$ is assymptoticaly linear it follows that both of these functions are subctitical, implying that we can associate a energy functional to each one. Moreover, the derivative of both functionals are of the form ``indetity minus compact''. As in the previous cases, in order to check the PS condition, it suffices to check that any PS sequence is bounded.\\
	\vspace{1mm}
	
	As we mentioned before the statement of the proposition we will prove the result for $J_\alpha^-$ since the prove for the other case is completely analogous.\\
	Let $\{u_n\}_{n\in \N}$ be a (PS)-sequence for $J_\alpha^-$. If we define $(u_n-\alpha)^+:=\max\{u_n-\alpha,0\}$ (the positive part part of $u_n-\alpha$), it follows that, for $n$ large
	\begin{equation*}
	DJ_\alpha^+(u_n)((u_n-\alpha)^+)=\int_{\Omega}|\nabla (u_n-\alpha)^+|^2-f'(\alpha)\int_{\Omega}((u_n-\alpha)^+)^2\leq \Vert (u_n-\alpha)^+\Vert_{H^1(\Omega)}.
	\end{equation*}
	Since $f'(\alpha)<0$ the last inequality proves that $\{(u_n-\alpha)^-\}_{n\in \N}$ is bounded in $H^1(\Omega)$ by a constant $K>0$. Fixing the notation $(u_n-\alpha)^-:=\min\{u_n-\alpha,0\}$ we can proceed as in the previous proposition and consider the decomposition  $(u_n-\alpha)^-=w_n+v_n$ with $w_n\in Y^\bot$ and $v_n\in Y$, where $Y$ is defined as in \textit{Remark} \ref{eigenvalues}. Hence, since our sequence satisfies the PS conditions, for $n$ large we have that
	\begin{align*}
	DJ_a^+(u_n)(w_n-v_n)=&\int_{\Omega}\nabla(u_n-\alpha)^-\cdot \nabla(w_n-v_n)+\nabla(u_n-\alpha)^+\cdot \nabla(w_n-v_n)-\int_{\Omega}f'(\alpha)(u_n-\alpha)^+(w_n-v_n)\\
	+&\int_{\Omega}f((u_n-\alpha)^-+\alpha)(w_n-v_n)\leq \Vert w_n+v_n\Vert_{H^1(\Omega)}.
	\end{align*}
	By the boundedness of  $\{(u_n-\alpha)^+\}_{n\in \N}$ in $H^1(\Omega)$ we can use the last estimation and Cauchy-Schwartz inequality to conclude that 
	\begin{equation*}
	\int_{\Omega}|\nabla w_n|^2-\int_{\Omega}|\nabla v_n|^2\leq \Vert w_n+v_n\Vert_{H^1(\Omega)}+\int_{\Omega}f(w_n+v_n+\alpha)(w_n-v_n)+K(1+|f'(0)|)\Vert w_n+v_n\Vert_{H^1(\Omega)}
	\end{equation*}
	and hence
	\begin{equation*}
	\int_{\Omega}|\nabla w_n|^2-\int_{\Omega}|\nabla v_n|^2\leq c\Vert w_n+v_n\Vert_{H^1(\Omega)}+\int_{\Omega}f(w_n+v_n+\alpha)(w_n-v_n)
	\end{equation*}
	In this point we can repeat the same arguments used in the previous proposition, which are almost identical but for a linear term introduced by the presence of $\alpha$ in the reaction $f$. After repeating the procedure we conclude that $\{(u_n-\alpha)^-\}_{n\in \N}$ is also bounded in $H^1(\Omega)$.\\
	
	The next part of the proof consists in showing that $\alpha$ is an strong minimum of the functional $J_\alpha^-$. First of all, notice that if define $F_\alpha^-(t):=\int_{\alpha}^{t}f(s)ds$ we can apply Taylor's theorem to $F_\alpha^+$ to get that for some $\delta>0$ and $|t|<\delta$ it holds
	\begin{equation*}
	F_\alpha^-(\alpha+t)=F^-(\alpha)+f(\alpha)t+\frac{f'(\alpha)}{2}t^2+o(t^2)=\left(\frac{f'(\alpha)}{2}+\frac{o(t^2)}{t^2}\right)t^2.
	\end{equation*}
	
	Therefore, there exists $0<\delta'<\delta$ such that for $|t|<\delta'$ we have $F_\alpha^+(\alpha+t)\leq \frac{c}{2}t^2$ for some $c<0$. On the other hand, our hypotheses about $f$ implies that the function $\frac{f_\alpha^-(t)}{t-\alpha}$ is bounded by a positive constant, thus there exists $c'>0$ such that $F_\alpha^-(t)\leq \frac{c'}{2}(t-\alpha)^2$ for every $t$.\\
	
	Keeping in mind these considerations we can estimate $J_\alpha^-(\alpha+u)$ for $u$ small in $H^1(\Omega)$ in the following way
	\begin{align*}
	J_\alpha^-(\alpha+u)\geq \frac{1}{2}\left(\int_{\Omega} |\nabla u|^2-c\int_{\{|u|<\delta'\}}u^2-c'\int_{\{|u|\geq\delta'\}}u^2\right)=\frac{1}{2}\left(\int_{\Omega} |\nabla u|^2-c\int_{\Omega}u^2+(c-c')\int_{\{|u|\geq\delta'\}}u^2\right)
	\end{align*}
	Using the critical continuous embedding, Holder inequality with $p=\frac{2^*}{2}$ and $q=\frac{2^*}{2^*-2}$ and Chebyshev inequality it follows that
	\begin{equation*}
	\int_{\{|u|\geq\delta'\}}u^2\leq C_N\Vert u \Vert_{H^1(\Omega)}^2\left(\frac{\Vert u\Vert_{L^2(\Omega)}}{\delta'^2}\right)^\frac{1}{q}
	\end{equation*}
	Thus, there exists a positive constant $C>0$ and $\varepsilon>0$ such that for $\Vert u \Vert_{H^1(\Omega)}\leq\varepsilon$
	\begin{equation*}
	J_\alpha^-(\alpha+u)\geq C \Vert u \Vert_{H^1(\Omega)}^2
	\end{equation*}
	Now, to check the last hypothesis of the mountain pass theorem, let us notice that since $f'(\infty)>0$ there exist constants $a>0$ and $b$ such that, for $t<\alpha$, $F_\alpha^-(t)>at^2+b$. Hence, let us notice that the sequence of constant functions $\{-n\}_{n\in\N}$ satisfies
	\begin{equation*}
	J_\alpha^-(-n)=-\int_{\Omega}F(-n)\leq |\Omega|(-an^2+b)\to -\infty,\quad n\to \infty.
	\end{equation*}
	Finally, by the mountain pass theorem $J_\alpha^-$ has a critical point $\omega_2$ such that $J(\omega_2)>0$, implying that $\omega_2$ is not identically $\alpha$. By our assumptions we can conclude that $\omega_2$ is a classical solution of the problem
	\begin{equation*}
	\begin{cases}
	-\Delta u=f_\alpha^-(u) \quad \text{in} \; \Omega \, ,\\
	\frac{\partial u}{\partial \hat{n}} = 0    \qquad \hspace{2mm} \enspace \text{on}\, \, \: \partial \Omega,
	\end{cases}
	\end{equation*}
	Hence, if $\omega_2$ is constant then $\omega_2$ has to be smaller than $\alpha$, since $f_\alpha^-$ has not zeroes greater than $\alpha$. If $\omega_2$ is nonconstant we can apply \textit{Proposition} \ref{qualitative} to conclude that the maximum of $\omega_2$ is strictly lower than $\alpha$ since $f_\alpha^-(t)<0$ for $t>\alpha$. Finally, since $f_\alpha^-(t)=f(t)$ for $t<\alpha$ the result follows.
\end{proof}

The following result provides an useful way to obtain solutions (hopefully nontrivial) only assuming some local properties in $f$.

\begin{proposition}[Mountain pass theorem between two trivial minima]
	Let $\alpha,\beta \in \R$ with $\alpha<\beta$ and let $f:\R\to \R$ be a continuous function such that
	\begin{itemize}
		\item $f(\alpha)=f(\beta)=0$.
		\item $f$ is differentiable at $\alpha$ and at $\beta$ with  $f'(\alpha)<0$ and $f'(\beta)<0$.
		
	\end{itemize}
	Then the problem \ref{eq Neumann problem} has a solution $\omega_3$ such that $\min_{\overline{\Omega}} \omega_3 >\alpha$ and  $\max_{\overline{\Omega}} \omega_3 <\beta$.
\end{proposition}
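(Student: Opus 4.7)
The plan is to reduce to an auxiliary problem whose reaction is truncated outside $[\alpha,\beta]$, for which the energy functional has $\alpha$ and $\beta$ (as constants) as two strict local minima, and then obtain the desired solution as a third, mountain-pass type critical point confined to $(\alpha,\beta)$ by the qualitative proposition. More precisely, I would set
$$
\tilde f(t)=\begin{cases}f'(\alpha)(t-\alpha),&t\le\alpha,\\ f(t),&\alpha\le t\le\beta,\\ f'(\beta)(t-\beta),&t\ge\beta,\end{cases}
$$
so that $\tilde f$ is continuous and asymptotically linear with slopes $f'(\alpha),f'(\beta)<0=\mu_0$ at $\pm\infty$. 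The associated functional $\tilde J\in C^1(H^1(\Omega),\R)$ then has derivative of the form identity-minus-compact, and I would use this truncated problem throughout and only at the end verify that its solution actually solves \eqref{eq Neumann problem}.

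Next I would check that $\tilde J$ satisfies (PS) by showing it is coercive: for $t\le\alpha$ one has $\tilde F(t)=\tilde F(\alpha)+\tfrac12 f'(\alpha)(t-\alpha)^2$, and similarly for $t\ge\beta$, so there exist $a>0$ and $b\in\R$ with $\tilde F(t)\le b-a\,t^2$ for $|t|$ large. Decomposing $u=\bar u+w$ with $\bar u$ the mean value and $w$ the zero-mean part, Poincar\'e--Wirtinger controls $\|w\|_{L^2}$ by $\|\nabla u\|_{L^2}$, while the quadratic decay of $\tilde F$ at infinity absorbs $\bar u$, giving $\tilde J(u)\to+\infty$ as $\|u\|_{H^1}\to\infty$. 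The fact that $\alpha$ and $\beta$ are strict local minima of $\tilde J$ follows from the same Taylor-plus-embedding-plus-Chebyshev estimate already performed in the previous proposition, applied near each constant separately using $f'(\alpha)<0$ and $f'(\beta)<0$; this yields $\tilde J(\alpha+u),\tilde J(\beta+u)\ge C\|u\|_{H^1}^2$ for $\|u\|_{H^1}$ small.

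Having two strict local minima of a $C^1$ functional satisfying (PS), the standard mountain-pass theorem between two local minima (of Pucci--Serrin type) produces a critical point $\omega_3$ of $\tilde J$ with
$$
\tilde J(\omega_3)>\max\{\tilde J(\alpha),\tilde J(\beta)\},
$$
so in particular $\omega_3$ is not identically $\alpha$ nor identically $\beta$. To finish, I would argue that $\omega_3$ satisfies $\alpha<\min_{\overline\Omega}\omega_3$ and $\max_{\overline\Omega}\omega_3<\beta$. Indeed, if $m:=\min_{\overline\Omega}\omega_3\le\alpha$, then $\tilde f(m)\ge 0$ (strictly positive if $m<\alpha$, zero if $m=\alpha$), so the remark following Proposition \ref{qualitative} forces $\omega_3\equiv m$; evaluating the equation at this constant gives $\tilde f(m)=0$, hence $m=\alpha$ and $\omega_3\equiv\alpha$, contradicting the strict energy inequality. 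The case of the maximum is symmetric, using $\tilde f(t)<0$ for $t>\beta$. Since $\tilde f\equiv f$ on $[\alpha,\beta]$, $\omega_3$ solves the original Neumann problem with the desired strict bounds.

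The main obstacle I expect is the bookkeeping behind the strict local minimum property in the $H^1$ topology, because the Taylor-type expansion near $\alpha$ or $\beta$ is pointwise whereas one needs a coercive lower bound on the whole of $H^1(\Omega)$; this is where the critical embedding / H\"older / Chebyshev argument from the preceding proposition becomes essential and must be adapted carefully around each constant, rather than around $0$.
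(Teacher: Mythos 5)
Your proposal is correct and follows essentially the same route as the paper: the same truncation of $f$ outside $[\alpha,\beta]$, coercivity of the truncated functional to get (PS), the Taylor/embedding estimate showing $\alpha$ and $\beta$ are strong local minima, the mountain pass theorem between the two minima, and finally Proposition \ref{qualitative} (with its remark) to confine the critical point strictly to $(\alpha,\beta)$ where the truncated reaction agrees with $f$. The only differences are elaborations (spelling out coercivity via the mean-value decomposition, and the slightly more careful endgame at $m=\alpha$), not a change of method.
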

\begin{proof}
	Let us define the following auxiliary continuous function $g:\R\to \R$ as follows
	
	\begin{equation*}
	g(t)=
	\begin{cases}
	f'(\alpha)(t-\alpha), \quad t<\alpha.\\
	f(t),\qquad \quad \quad t\in[\alpha,\beta],\\
	f'(\beta)(t-\beta), \quad t>\beta.
	\end{cases}
	\end{equation*}
	
	Notice that if we define $G(t):=\int_{0}^{t}$ there exist constants $a<0$ and $b\in \R$ such that $G(t)\leq \frac{a}{2}t^2+b$ implying that the functional
	\begin{equation*}
	I(u):=\int_{\Omega}\frac{1}{2}|\nabla u|^2-G(u)
	\end{equation*} 
	is well defined, continuous differentiable and coercive in $H^1(\Omega)$, which implies that $I$ satisfies the (PS) condition (again the derivative $I$ is of the form ``identity minus compact'').\\
	
	Without loss of generality let us assume that $I(\alpha)\leq I(\beta)$. Clearly, proceeding as in the previous propositions it can be shown that $\alpha$ and $\beta$ are strong minima of $I$, therefore by the mountain pass theorem we can find a critical point $w_3$ of $I$ such that $I(w_3)>\max\{I(\alpha),I(\beta)\}$. On the other hand, by regularity theory we can conclude that $\omega_3$ is a classical solution of the problem
	
	\begin{equation*}
	\begin{cases}
	-\Delta u=g(u) \quad \text{in} \; \Omega \, ,\\
	\frac{\partial u}{\partial \hat{n}} = 0    \qquad \hspace{2mm} \enspace \text{on}\, \, \: \partial \Omega.
	\end{cases}
	\end{equation*}
	Hence, if $\omega_3$ is constant then $\omega_3$ has to be greater than $\alpha$ and smaller that $\beta$, since $g$ has not in the complement of the interval $[\alpha,\beta]$. If $\omega_3$ is nonconstant we can apply \textit{Proposition} \ref{qualitative} to conclude that the maximum of $\omega_3$ is strictly lower than $\beta$ since $g<0$ for $t>beta$, simiarly the minimum of $\omega_3$ must be strictly greater than $\alpha$. Finally, since $g(t)=f(t)$ for $t\in[\alpha,\beta]$ the result follows.
	
\end{proof}

The following proposition is intended to prove that under some mild assumptions the critical points obtained before by meaas the mountain pass theorem are not trivial. In order to prove this we will require some results due to Helmut Hofer, (see ``A note on the topological degree at a critical point of mountainpass-type'', ``A geometric description of the neighbourhood of a critical point given by the mountain pass theorem'' and ``The topolocial degree at a critical point of the mountain-pass type'').\\

\begin{definition}[Critical point of the mountain-pass type]
	Let $X$ be a real Banach space, let $U\subset X$ a nonvoid open set and let $J\in C^1(U;\R)$. Given $u\in X$ is such that $DJ(u)=0$ and $J(u)=d$ we will say that $u$ is of the mountain-pass type (mp type) if there exist an open neighborhood $W$ of $u$ in $U$ such that for every $V$ open such that $u\in V\subset W$ the open set $\{v\in V| J(v)<d\}$ is nonvoid and non-pathconnected.
\end{definition} 
\begin{remark}
	Connected and path-connectedness in open subsets of a normed linear space are equivalent, since an open set in a normed linear space is locally path-connected,
\end{remark}
First of all let us give conditions to ensure that a trivial solution of \ref{eq Neumann problem} is nondegenerate
\begin{proposition}\label{ nondegenerancy}
	Let $\alpha\in \R$ and let $f:\R\to\R$ be a continuous differentiable function such that
	\begin{itemize}
		\item $f(\alpha)=0$.
		\item $f'(\alpha)\neq \lambda_k$ for $k\in \N$
		\item $f'$ is subcritical.
	\end{itemize}
	Then $\alpha$ is a nondegenerate critical point of the energy functional $J$ associated with $f$.
\end{proposition}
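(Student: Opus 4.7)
The plan is to identify the quadratic form $D^2 J(\alpha)$ explicitly and to reduce nondegeneracy to injectivity via a Fredholm alternative argument. On $H^1(\Omega)$, endowed with the inner product $\langle u,v\rangle_{H^1} = \int_\Omega(\nabla u\cdot\nabla v + uv)$, the Riesz representation of $DJ(u)$ yields an operator of the form $u\mapsto u-K(u)$ where $K$ is compact (this is the ``identity minus compact'' structure already invoked repeatedly in the paper, justified here because the subcritical bound on $f'$ makes the Nemytskii operator $u\mapsto f(u)$ a $C^1$ map from $H^1(\Omega)$ into $L^{2^\ast/(2^\ast-1)}(\Omega)$, and Rellich compactness takes care of the rest). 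In particular $J\in C^2(H^1(\Omega);\R)$ and $D^2 J(\alpha)$ has the form $I-T$ with $T$ compact and self-adjoint, so by the Fredholm alternative $D^2 J(\alpha)$ is an isomorphism if and only if it has trivial kernel.

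I would then compute $D^2J(\alpha)$ at the constant function $\alpha$. Since $\alpha$ is constant and $F''(\alpha)=f'(\alpha)$, a direct calculation gives
\begin{equation*}
D^2J(\alpha)[v,w] \;=\; \int_\Omega \nabla v\cdot\nabla w \;-\; f'(\alpha)\int_\Omega v\,w, \qquad v,w\in H^1(\Omega).
\end{equation*}
Suppose $v\in H^1(\Omega)$ lies in the kernel, i.e.\ $D^2J(\alpha)[v,w]=0$ for every $w\in H^1(\Omega)$. Then $v$ is by definition a weak solution of the Neumann eigenvalue problem
\begin{equation*}
\begin{cases}
-\Delta v = f'(\alpha)\,v & \text{in } \Omega,\\[2pt]
\dfrac{\partial v}{\partial\hat n} = 0 & \text{on } \partial\Omega.
\end{cases}
\end{equation*}
If $v\not\equiv 0$, then $f'(\alpha)$ must coincide with some eigenvalue $\lambda_k$ of the Neumann Laplacian, contradicting our hypothesis. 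Hence $v=0$, the kernel is trivial, and $D^2 J(\alpha)$ is an isomorphism, proving nondegeneracy.

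The only real obstacle is the soft functional-analytic setup: checking that $J$ is genuinely $C^2$ near the constant $\alpha$ and that $D^2J(\alpha)$ has the advertised compact-perturbation-of-identity form. Both facts ultimately rest on the subcriticality of $f'$ together with the Sobolev embedding $H^1(\Omega)\hookrightarrow L^{2^\ast}(\Omega)$ and Rellich's compactness theorem; once this groundwork is in place, the proof is just the Fredholm dichotomy combined with the fact that $f'(\alpha)$ is not a Neumann eigenvalue. I would not expect the computation itself to present any subtlety; the care is all in stating the regularity of $J$ precisely enough for the Fredholm argument to apply.
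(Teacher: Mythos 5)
Your proposal is correct and follows essentially the same route as the paper: reduce nondegeneracy to injectivity of the compact perturbation of the identity $D^2J(\alpha)$ via the Fredholm alternative, then observe that a nontrivial kernel element would make $f'(\alpha)$ a Neumann eigenvalue, contradicting the hypothesis. The only cosmetic difference is that the paper routes the kernel computation through the spectrum $\left\{\tfrac{1}{1+\lambda_k}\right\}$ of the solution operator $T$ (forcing it to treat $f'(\alpha)=-1$ as a separate trivial case), whereas you identify the kernel directly with weak solutions of the eigenvalue problem, which avoids that case split.
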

\begin{proof}
	Let us notice that we can write $D^2J(\alpha)(u,v)=\langle Lu,v\rangle$ with $Lu:=u-(1+f'(\alpha))T(u)$, where $T:H^1(\Omega)\to H^1(\Omega)$ is the solution operator of the linear problem 
	\begin{equation}\label{eq Neumann linear problem}
	\begin{cases}
	-\Delta u=f \quad \text{in} \; \Omega \, ,\\
	\frac{\partial u}{\partial \hat{n}} = 0    \qquad  \enspace \text{on}\, \, \: \partial \Omega.
	\end{cases}
	\end{equation}
	Since $L$ has the structure ``Identity minus compact'', Fredholm alternative implies that it suffices to prove that $L$ is injective in order to ensure that $\alpha$ is nondegenerate. In this order of ideas consider $u,v\in H^1(\Omega)$ such that $Lu=Lv$, this implies that
	\begin{equation*}
	\frac{1}{f'(\alpha)+1}(u-v)=T(u-v).
	\end{equation*}
	\textbf{Note:} The result is trivial if $f'(\alpha)=-1$.\\
	
	On the other hand, since the eigenvalues of $T$ have the form (see Santiago Correa's thesis) $\frac{1}{1+\lambda_k}$ for some $k\in \N$, then $u=v$.
\end{proof}
In the following proposition we give some conditions to ensure that a trivial solution of \ref{eq Neumann problem} is not of the mp type.

\begin{proposition}\label{ non montain pass type}
	Let $\alpha\in \R$ and let $f:\R\to\R$ be a continuous differentiable function such that
	\begin{itemize}
		\item $f(\alpha)=0$.
		\item $f'(\alpha)> \lambda_k$ for $k\geq 2$
		\item $f'$ is subcritical.
	\end{itemize}
	Then if either $\alpha$ is an isolated critical point of the energy functional $J$ associated with $f$ or $f'(\alpha)\neq \lambda_l$ for every $l\in \N$, then $\alpha$ is not a critical point of the mp type.
\end{proposition}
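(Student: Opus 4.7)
The plan is to combine the Morse-theoretic characterization of mountain-pass critical points due to Hofer with a direct computation of the critical groups $C_q(J,\alpha)$. Concretely, for a $C^1$ functional satisfying the Palais-Smale condition, an isolated critical point of mountain-pass type must satisfy $C_1(J,\alpha)\neq 0$; this is the content of the Hofer papers cited just before the statement. Hence, if under our hypotheses we can exhibit $C_1(J,\alpha)=0$, then $\alpha$ cannot be of mp type, and the proposition follows.

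The first step is to pin down the Morse index of $\alpha$. As computed in Proposition \ref{ nondegenerancy}, the Hessian reads $D^2 J(\alpha)(u,v)=\langle Lu,v\rangle$ with $L=I-(1+f'(\alpha))T$, where $T:H^1(\Omega)\to H^1(\Omega)$ is the compact solution operator of the Neumann problem. Since $T$ and the Neumann Laplacian share the same eigenfunctions, the spectrum of $L$ consists of the values $\frac{\lambda_j-f'(\alpha)}{1+\lambda_j}$, and a mode is negative precisely when $\lambda_j<f'(\alpha)$. The assumption $f'(\alpha)>\lambda_k$ with $k\geq 2$ then forces $\lambda_1,\ldots,\lambda_k$ to lie strictly below $f'(\alpha)$, so the Morse index $m_-$ of $\alpha$ satisfies $m_-\geq k\geq 2$.

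Now I would split according to the two alternatives in the hypothesis. If $f'(\alpha)\neq\lambda_l$ for every $l\in\N$, Proposition \ref{ nondegenerancy} guarantees that $\alpha$ is a nondegenerate critical point; the classical Morse lemma then yields $C_q(J,\alpha)=0$ for every $q\neq m_-$, and in particular $C_1(J,\alpha)=0$. In the remaining case $\alpha$ is only assumed to be isolated, and the natural tool is the Gromoll-Meyer splitting together with the shifting theorem (Chang \cite{CHANGKC93}, Ch.~I): because the kernel of $L$ is finite-dimensional (since $L=I-\text{compact}$ is Fredholm of index zero) and $J$ is $C^2$, one obtains a local decomposition of $J$ as a nondegenerate quadratic form of Morse index $m_-$ plus a reduced functional $\hat J$ on $\ker L$, whence $C_q(J,\alpha)\cong C_{q-m_-}(\hat J,0)$; in particular $C_q(J,\alpha)=0$ for $q<m_-$, and again $C_1(J,\alpha)=0$. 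In both cases Hofer's criterion forces the required contradiction.

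The main obstacle is the second (possibly degenerate) alternative. Although the shifting theorem is classical, one has to verify that its hypotheses hold here: $C^2$ regularity of $J$ (coming from $f\in C^1$ and subcriticality of $f'$), finite-dimensionality of $\ker L$ (immediate from compactness of $T$), and isolatedness of $\alpha$ (a hypothesis). The nondegenerate alternative, by contrast, is essentially a direct consequence of the Morse lemma once the Morse index count above is in hand.
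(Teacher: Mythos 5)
Your argument is correct, but it runs along a genuinely different track from the paper's. The paper never touches critical groups in this proof: it applies Hofer's local Morse splitting to $\Phi(u)=J(u+\alpha)-J(\alpha)$, notes that $f'(\alpha)>\lambda_k$ with $k\geq 2$ forces $\dim H^{-}\geq 2$, and then exhibits explicit paths (contracting the $H^{+}$ component along $t\mapsto D(x+y+tz)$ and then moving inside the sphere of the at-least-two-dimensional negative space) to show that the local strict sublevel set $\{v : J(v)<J(\alpha)\}$ is path-connected near $\alpha$, which directly negates the definition of mp type. You instead compute $C_1(J,\alpha)=0$ --- via the classical Morse lemma when $f'(\alpha)$ avoids the spectrum, and via the Gromoll--Meyer shifting theorem when $\alpha$ is merely isolated, using that $L=I-(1+f'(\alpha))T$ is Fredholm with Morse index $m_-\geq k\geq 2$ --- and then invoke the homological criterion that an isolated critical point of mp type has $C_1\neq 0$. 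Both routes hinge on exactly the same spectral count $\left\{\frac{\lambda_l-f'(\alpha)}{1+\lambda_l}\right\}$. The paper's route is more self-contained: it needs only Hofer's splitting and no critical-group machinery. Your route is more modular and yields a stronger intermediate statement ($C_q(J,\alpha)=0$ for all $q<m_-$), and it handles the degenerate case without any explicit path construction. Two points you should make explicit for completeness: (i) both the $C_1\neq 0$ criterion and the shifting theorem presuppose a local deformation/(PS) property, which is not a global hypothesis here but does hold on bounded neighborhoods of the isolated critical point precisely because $\nabla J$ has the form identity minus compact; and (ii) the shifting theorem requires $J\in C^2$, which follows from $f\in C^1$ with $f'$ subcritical, consistent with the paper's own use of $D^2J(\alpha)$.
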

\begin{proof}
	For this prove we will require a version of the so-called Morse Lemma due to Hofer (Lemma 3,``A note on the topological degree at a critical point of mountainpass-type''). In order to apply this lemma notice that $0$ is an isolated critical point of the functional $\Phi(u):=J(u+\alpha)-J(\alpha)$ and that $\nabla \Phi$ has the form ``identity minus compact''. On the other hand, we have that $D^2\Phi(0)(u,v)=\langle Lu,v\rangle$, with
	\begin{equation*}
	Lu:=u-T(u)
	\end{equation*}
	as in the previous proposition. A straightforward computation show us that the eigenvalues of $L$ are of the form $\left\{\frac{\lambda_l-f'(\alpha)}{\lambda_l+1}\right\}_{l\in \N}$, this implies that dim$H^{-}\geq2$.\\
	Now, Hofer's result implies that there exists a homeomorphism $D:V\to W$ where $V$ and $W$ are neighbourhoods of $0$ in $H^1(\Omega)$ and a $C^1$-map origin preserving $\beta$ defined in a 0-neighbourhood of $H^0$ into $H^{-}\oplus H^{+}$ such that
	\begin{equation*}
	\Phi(D(x+y+z))=\frac{1}{2}(-\Vert x\Vert+\Vert z\Vert)+\Phi(y+\beta y)
	\end{equation*} 
	for $x+y+z \in H^{-}\circ H^0\oplus H^{+}$ small.
	On the other hand, our hypotheses implies that there exists a negative constant $C<0$ and $\varepsilon>0$ such that for $u\in \bigoplus\limits_{i=1}^{k} E_i$ with $\Vert u \Vert_{H^1(\Omega)}\leq\varepsilon$
	\begin{equation*}
	J(u+\alpha)\leq C \Vert u \Vert_{H^1(\Omega)}^2+J(\alpha).
	\end{equation*}
	Hence, it is clear that for any $\delta<\varepsilon$
	\begin{equation*}
	C_{\delta}:=B_\delta(\alpha)\cap \bigoplus\limits_{i=1}^{k} E_i 
	\end{equation*}
	has nonempty intersection with $ \{v\in H^1(\Omega)| J(v)<J(\alpha)\}$. Let us define, $N_\delta:=B_\delta(0)\cap \bigoplus\limits_{i=1}^{k} E_i$, thus, taking $\delta$ small enough it follows that the set $B_\delta:=\left(\alpha+D(N_\delta)\right)\cap \{v\in H^1(\Omega)| J(v)<J(\alpha)\}$ is pathconnected. Indeed, consider $u=\alpha+D(x+y+z)\in B_\delta$, and the curve $a(t)=\alpha+D(x+y+tz)$. Morse Lemma implies that
	\begin{equation*}
	J(D(x+y+tz)+\alpha)=J(\alpha)+\frac{1}{2}(\Vert x\Vert+t^2\Vert z\Vert)+\Phi(y+\beta y)\leq J(u)<J(\alpha).
	\end{equation*} 
	Finally, since the set $D\left(\left\{(x,y)\in H^{-1}\oplus H^0| \Vert x+y\Vert <\delta \right\}\right)$ is pathconnected the result follows. 
\end{proof}

	\section{Leray-Schauder degree}
	In this section we will provide some qualitative information about the solutions found in the previous section, that will be useful to prove some multiplicity results in this section and in the next one. In order to give a complete characterization of the critical groups of the mp type critical points we should prove a version of the Hess-Kato theorem for Neumann boundary value problems. In the following proof we follow the ideas of \cite{DEFIGUEIREDO82} .
	
	\begin{lemma}\label{lemma kato-hass type}
		Let $h\in C(\overline{\Omega})$. If the weak weighted eigenvalue problem associated with
		\begin{equation*}
		\begin{cases}
		-\Delta u(x)+u(x)=\mu h(x)u(x) \quad \text{in} \; \Omega \, ,\\
		\frac{\partial u}{\partial \hat{n}} = 0    \qquad \hspace{2mm} \enspace \qquad \qquad \qquad \, \, \, \, \text{on}\,  \: \partial \Omega,
		\end{cases}
		\end{equation*}
		has a smallest positive eigenvalue $\mu$, then all the eigenfunctions associated with $\mu$ are positive and $\mu$ is simple.
	\end{lemma}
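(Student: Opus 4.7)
The strategy is to establish a variational characterization of $\mu$ and then combine it with the strong maximum principle. Since $-\Delta + I$ with Neumann conditions is coercive on $H^1(\Omega)$, its inverse $T\colon L^2(\Omega) \to H^1(\Omega)$ is compact, and $\mu$ is the reciprocal of the largest positive eigenvalue of the compact self-adjoint operator $u\mapsto T(hu)$ on $(H^1(\Omega),\langle\cdot,\cdot\rangle_{H^1})$. Equivalently,
\begin{equation*}
\mu = \inf\left\{\int_\Omega (|\nabla u|^2 + u^2)\,dx : u\in H^1(\Omega),\ \int_\Omega h u^2\,dx = 1\right\}.
\end{equation*}
This infimum is attained: the constraint functional is weakly continuous because $h\in C(\overline{\Omega})$ is bounded and $H^1\hookrightarrow L^2$ compactly, while the objective is weakly lower semicontinuous. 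A Lagrange multiplier computation shows any minimizer is an eigenfunction for $\mu$; conversely, testing the weak equation against an eigenfunction $v$ yields $\|v\|_{H^1}^2=\mu\int h v^2$, so every eigenfunction realizes the infimum.

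Next I would argue that every eigenfunction is of one sign. If $u$ is a minimizer then so is $|u|$, since $|u|\in H^1(\Omega)$, $|\nabla|u||=|\nabla u|$ a.e., and $\int h u^2 = \int h|u|^2$. Hence $|u|$ is a nonnegative eigenfunction for $\mu$. By elliptic regularity (with $h$ continuous), $|u|\in C(\overline{\Omega})$ and in fact $|u|$ is a classical solution. Choose $K>0$ so large that $K+1-\mu h(x)\geq 0$ on $\overline{\Omega}$ and rewrite the equation as
\begin{equation*}
-\Delta |u| + (K+1-\mu h)|u| = K|u| \geq 0 \quad \text{in } \Omega.
\end{equation*}
The strong maximum principle (Gilbarg-Trudinger Theorem 3.5) forbids an interior zero unless $|u|\equiv 0$, and the Hopf boundary point lemma combined with $\partial_\nu |u|=0$ rules out a boundary zero. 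Since $|u|\not\equiv 0$, we conclude $|u|>0$ on $\overline{\Omega}$, so $u$ has constant sign.

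For simplicity, let $u_1,u_2$ be two eigenfunctions for $\mu$; by the previous step, after a sign change, both are strictly positive on $\overline{\Omega}$. Fix $x_0\in\overline{\Omega}$ and set $\alpha=u_2(x_0)/u_1(x_0)>0$. Then $w:=\alpha u_1 - u_2$ is an eigenfunction for $\mu$ with $w(x_0)=0$. If $w\not\equiv 0$, the second step forces $|w|>0$ everywhere, contradicting $w(x_0)=0$. Hence $w\equiv 0$, i.e.\ $u_2=\alpha u_1$, so the eigenspace is one-dimensional.

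The main obstacle is the maximum principle step: ensuring that $|u|$ inherits enough regularity to invoke the classical SMP and Hopf lemma, and dealing with the sign-indefinite zeroth-order coefficient $1-\mu h$ by absorbing it into a nonnegative coefficient via the shift by $K$ (which puts the equation in the standard Gilbarg-Trudinger framework with nonnegative right-hand side). All other steps are routine applications of the spectral theorem for compact self-adjoint operators and the direct method of the calculus of variations.
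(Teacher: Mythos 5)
Your proof is correct and follows essentially the same route as the paper's: both characterize $\mu$ variationally through the compact self-adjoint operator built from the weight $h$, produce a nonnegative eigenfunction attaining the extremum (you via $|u|$, the paper via the decomposition $u=u^{+}+u^{-}$), upgrade nonnegativity to strict positivity by the strong maximum principle and Hopf lemma (which the paper packages as its Proposition~\ref{qualitative}), and deduce simplicity from the fact that all eigenfunctions are one-signed. The only caveat, shared by the paper, is that with $h$ merely continuous the eigenfunction is a strong $W^{2,p}$ solution rather than a classical one, so the maximum principle should be cited in its strong-solution form.
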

	\begin{proof}
		Let us assume that $u\in H^1(\Omega)$ is a eigenfunction associated with $\mu$. Then we have that for every $v\in H^1(\Omega)$
		
		\begin{equation}\label{eq weak kato-hess}
		\int_{\Omega}\nabla u\cdot  \nabla v +uv=\mu\int_{\Omega}huv
		\end{equation}
		Consider the linear bounded operator $T:L^2(\Omega)\to H^1(\Omega)$ defined through Riesz representation theorem by the expression
		\begin{equation*}
		\langle T(f),v\rangle=\int_{\Omega}hfv,\qquad \forall f\in L^2(\Omega), \forall v\in H^1(\Omega).
		\end{equation*}
		By the compact  Sobolev embeddings, $T:H^1(\Omega)\to H^1(\Omega)$ is compact and since
		for every $u,v\in  H^1(\Omega)$
		\begin{equation*}
		\langle T(u),v\rangle=\int_{\Omega}huv=\langle T(v),u\rangle,
		\end{equation*}
		$T$ is also self-adjoint. Notice that the biggest positive eigenvalue of $T$ corresponds with (is the reciprocal of) the smallest positive eigenvalue of \ref{eq weak kato-hess} which exists by our hypotheses.This biggest positive eigenvalue of $T$ is characterized by
		
		\begin{equation*}
		\frac{1}{\mu}=\sup[\langle Tx,x\rangle. \Vert x\Vert=1],
		\end{equation*}
		moreover, if $u\in H^1(\Omega)$ with $\Vert u\Vert=1$ attains the supremum above then $u$ is an eigenfunction associated with that eigenvector. Let $u$ be a eigenfunction associated with $\mu_1$, let us suppose by contradiction that $u$ is sign changing, and let $u^+$ and $u^{-}$ its negative and positive parts, respectivelly. Since $u$ is sign changing $u^+$ and $u^-$ are not 0 implying that
		\begin{equation*}
		\frac{1}{\mu}=\langle Tu,u\rangle=\Vert u^+\Vert^2\langle T\frac{u^+}{\Vert u^+\Vert},\frac{u^+}{\Vert u^+\Vert}\rangle+\Vert u^-\Vert^2\langle T\frac{u^-}{\Vert u^-\Vert},\frac{u^-}{\Vert u^-\Vert}\rangle\leq \frac{1}{\mu}
		\end{equation*}
		and the equality only holds if both, the normalized negative and positive parts attains the supremum. Which implies that there exists a positive eigenfunction associated with $\frac{1}{\mu}$. Let $\omega$ be  the normalized positive part of $u$, our previous considerations implies that $\omega$ satisfies the equation
		\begin{equation}
		\int_{\Omega}\nabla \omega\cdot  \nabla v =\int_{\Omega}\left(\mu h-1\right)\omega v
		\end{equation}
		By standard regularity theory \cite{MMP} we conclude that $\omega$ also solves \ref{eq Neumann problem} with $f(x,t)=\left(\frac{1}{\mu_1}h(x)-1\right)t$.\\
		But from the \textit{Proposition} \ref{qualitative} if $\omega$ is not constant it follows that $f(\min_{\overline{\Omega}} \omega,x)<0$ for at least one $x\in \overline{\Omega}$ implying that $\min_{\overline{\Omega}} \omega>0$. In any case this contradicts our assumption that $u$ is sign changing.\\
		
		The previous observations let us conclude that for any pair $u$ and $v$ eigenvalues associated with $\mu$ the sets
		
		\begin{equation*}
		\left\{\alpha \in \R\bigg| u+\alpha v\geq 0 \right\}, \qquad \left\{\alpha \in \R\bigg| u+\alpha v\leq 0 \right\}.
		\end{equation*}
		are not empty and closed. Hence, by connectedness of $\R$ there exists $\alpha \in \R$ such that $u=\alpha v$.
	\end{proof}
	\begin{corollary}\label{corollary crticial groups mp}
		Suppose that $f$ is continous differentiable and that $f'$ is subcritical. If $u_0$ is an isolated critical point of $J$ of the mp type then
		\begin{equation*}
		\text{Rank}\left(C_q(J,u_0)\right)=\delta_{q,1}
		\end{equation*}
	\end{corollary}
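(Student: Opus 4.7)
The plan is to deduce the corollary by combining Hofer's criterion for critical groups at mountain-pass-type points with the Hess--Kato type spectral information coming from Lemma \ref{lemma kato-hass type} applied to the linearisation of $J$ at $u_0$. First, I would observe that the subcriticality of $f'$ guarantees that $J\in C^2(H^1(\Omega);\R)$ and that $D^2J(u_0)$ has the form ``identity minus compact'' on $H^1(\Omega)$, so that it is a self-adjoint Fredholm operator of index zero. Consequently, $u_0$ has a finite Morse index $m=\dim H^-$ and a finite nullity $n=\dim H^0$, and the Gromoll--Meyer shifting theorem ensures that $C_q(J,u_0)$ is concentrated in the range $m\leq q\leq m+n$.

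Next, since $u_0$ is an isolated critical point of mountain-pass type, Hofer's theorem (in \emph{The topological degree at a critical point of the mountain-pass type}) yields $C_1(J,u_0)\neq 0$. Combined with the range constraint from the shifting theorem this already forces $m\leq 1\leq m+n$. To sharpen the upper bound on $m$ I would apply Lemma \ref{lemma kato-hass type} with the weight $h(x):=1+f'(u_0(x))$ (after rewriting the linearised eigenvalue problem $-\Delta v-f'(u_0)v=\lambda v$ in the form $-\Delta v+v=\mu h v$). The lemma delivers simplicity and positivity of the smallest positive eigenvalue of the weighted problem, which translates into the fact that the negative definite subspace of $D^2J(u_0)$ has dimension at most one, hence $m\leq 1$. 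In parallel, the same lemma forces any nontrivial kernel direction to be spanned by a strictly positive eigenfunction.

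Finally, I would split into the nondegenerate and degenerate subcases. If $n=0$, necessarily $m=1$ and the shifting theorem immediately gives $C_q(J,u_0)\cong \delta_{q,1}\,\Z$. If $n\geq 1$, I would appeal to Hofer's refined Morse lemma (the same version already invoked in the proof of Proposition~\ref{ non montain pass type}), which reduces the computation of the critical groups to the groups of the finite-dimensional reduction $y\mapsto \Phi(y+\beta y)$ on $H^0$; the positivity provided by Lemma \ref{lemma kato-hass type} then constrains this reduction to have either a strict local minimum or a strict local maximum at the origin, and a direct calculation (combined with the $m\leq 1\leq m+n$ sandwich and the non-vanishing of $C_1$) yields $C_q(J,u_0)\cong \delta_{q,1}\,\Z$ after the shift. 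The main obstacle is precisely this degenerate step: one must match the abstract decomposition produced by Hofer's Morse lemma with the positive simple eigenfunction coming from Lemma \ref{lemma kato-hass type} in order to rule out any spurious contribution to the critical groups in degrees other than one.
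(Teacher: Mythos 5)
Your overall strategy is the same as the paper's: rewrite the linearised eigenvalue problem for $D^2J(u_0)$ as a weighted problem with weight $h=1+f'(u_0)$, invoke Lemma \ref{lemma kato-hass type} to get simplicity of the lowest eigenvalue, and then feed this spectral information into the Hofer-type characterisation of critical groups at mountain-pass points. The only real difference is that the paper stops after verifying the spectral hypothesis and cites Theorem 1.6 of \cite{CHANGKC93} (which packages exactly the shifting-theorem and finite-dimensional-reduction analysis you sketch), whereas you attempt to re-derive that theorem.

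In doing so you make one claim that is genuinely false: Lemma \ref{lemma kato-hass type} does \emph{not} imply that ``the negative definite subspace of $D^2J(u_0)$ has dimension at most one.'' The lemma gives simplicity (and positivity of the eigenfunction) only for the \emph{smallest} eigenvalue; $D^2J(u_0)$ may perfectly well have many negative eigenvalues, of which only the lowest is simple. The bound $m\leq 1$ comes solely from the mountain-pass property ($C_1\neq 0$ by Hofer) together with the shifting theorem, as you yourself note one sentence earlier, so the error is not fatal — but the ``sharpening'' step should be deleted. Relatedly, your assertion that ``any nontrivial kernel direction is spanned by a strictly positive eigenfunction'' only holds when $0$ is itself the smallest eigenvalue; if the smallest eigenvalue is strictly negative the kernel eigenfunctions need not be signed. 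The place where the simplicity statement is actually needed is precisely the case where the smallest eigenvalue equals $0$: there it forces $m=0$, $n=1$, so the reduced functional lives on a one-dimensional kernel, and $C_1\neq 0$ then forces a local maximum and $C_q\cong\delta_{q,1}\mathbb{F}$. In the case where the smallest eigenvalue is strictly negative, simplicity gives $m\geq 1$, hence $m=1$, and $C_0$ of the reduced functional being nonzero forces a local minimum on the kernel, again yielding $\delta_{q,1}$. Spelling out this two-case analysis (rather than the vague ``direct calculation'' in your degenerate step) would make your argument a correct, self-contained proof of the theorem the paper cites.
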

	\begin{proof}
		Notice that  if $\lambda$ is a nonpositive eigenvalue of $D^2J(u_0)$ and $u$ is an eigenfuction associated with $\lambda$ then
		\begin{equation*}
		\int_{\Omega}\nabla u\cdot \nabla v -f'(u_0)uv=\lambda\int_{\Omega}\nabla u\cdot \nabla v+uv
		\end{equation*}
		imlying that
		\begin{equation*}
		\int_{\Omega}\nabla u\cdot  \nabla v +uv=\frac{1}{1-\lambda}\int_{\Omega}(f'(u_0)+1)uv
		\end{equation*}
		for every $v\in H^1(\Omega)$. Hence, the smallest negative eigenvalue of our problem corresponds to the smallest positive eigenvalue of the weighted problem
		\begin{equation*}
		\int_{\Omega}\nabla u\cdot  \nabla v +uv=\mu\int_{\Omega}huv
		\end{equation*}
		with $h=f'(u_0)+1 \in C(\overline{\Omega})$. Thereby, if the smallest eigenvalue of $D^2 J(u_0)$ is nonpositive it must be simple.\\
		
		Finally, the previous remark combined with Theorem 1.6 of \cite{CHANGKC93} implies the result. 
	\end{proof}
	
	Since all the solutions that corresponds to critical points of the mp type  obtained so far corresponds to critical points of truncated functionals it is necessary to prove that their critical groups are preserved by the non trucated energy functional.
	
	\begin{lemma}\label{lemma invariance of critical groups}
		Suppose that $f$ and $g$ are continuous differentiable functions that coincides in the closed interval $[a,b]$ such that $f'$ and $g'$ are subcritical. If $u_0$ is critical point of the energy functional $I$ associated with $g$ such that 
		\begin{equation*}
		a<\inf_{x\in\overline{\Omega}}u(x)\leq \sup_{\overline{x\in\Omega}}u(x) <b,
		\end{equation*}
		then $u_0$ is an critical point of $J$. Moreover, $u_0$ is isolated as critical point of $J$ if and only if $u_0$ is isolated as critical point of $I$ and in this case $C_q(u_0, J)=C_q(u_0, I)$ for all $q\in \Z$. 
	\end{lemma}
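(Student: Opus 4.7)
My plan is to prove the three assertions of the lemma in turn, with the equality of critical groups handled last via a homotopy argument.

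\emph{Step 1 (Critical point).} Since $u_0$ is a critical point of $I$, by elliptic regularity for the Neumann problem (using that $g'$ is subcritical), $u_0$ is a classical solution of $-\Delta u_0 = g(u_0)$ with $\partial u_0 / \partial \hat{n} = 0$. The hypothesis $a < \inf_{\overline{\Omega}} u_0 \leq \sup_{\overline{\Omega}} u_0 < b$ gives $u_0(\overline{\Omega}) \subset [a,b]$, and since $f \equiv g$ on $[a,b]$, we conclude $f(u_0(x)) = g(u_0(x))$ for every $x \in \overline{\Omega}$. Hence $u_0$ also solves $-\Delta u = f(u)$ with the Neumann boundary condition, i.e.\ $DJ(u_0) = 0$.

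\emph{Step 2 (Isolated iff isolated).} By symmetry it suffices to show one direction, say $u_0$ isolated for $I$ implies $u_0$ isolated for $J$. Suppose not: then there exist critical points $v_n$ of $J$ with $v_n \neq u_0$ and $v_n \to u_0$ in $H^1(\Omega)$. By the standard bootstrap for subcritical nonlinearities (Moser iteration for a uniform $L^{\infty}$ bound followed by Schauder estimates applied to $-\Delta v_n = f(v_n)$ with Neumann data), $v_n \to u_0$ in $C^{2,\alpha}(\overline{\Omega})$, hence in particular in $C^0(\overline{\Omega})$. The strict inequalities $a < \inf u_0$ and $\sup u_0 < b$ therefore force $v_n(\overline{\Omega}) \subset (a,b)$ for all $n$ large, so $g(v_n) = f(v_n)$ pointwise and consequently $v_n$ is also a critical point of $I$, contradicting isolatedness of $u_0$ for $I$.

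\emph{Step 3 (Critical groups).} I use the linear homotopy $J_t := (1-t)I + tJ$ for $t \in [0,1]$, so $J_0 = I$, $J_1 = J$, and the associated nonlinearity $h_t := (1-t)g + tf$ still coincides with $f=g$ on $[a,b]$. A bootstrap argument exactly as in Step 2, but now allowing $t$ to vary in $[0,1]$, yields a single open neighborhood $U$ of $u_0$ in $H^1(\Omega)$ such that $u_0$ is the \emph{only} critical point of $J_t$ in $U$ for every $t \in [0,1]$: any sequence $(t_n, v_n)$ with $v_n \in U$ critical for $J_{t_n}$ and $v_n \to u_0$ would, by the same $C^{2,\alpha}$-bootstrap (the constants in Moser/Schauder can be chosen uniform in $t \in [0,1]$), eventually have its range in $(a,b)$, where all $h_t$ agree with $g$, making $v_n$ a critical point of $I$ and contradicting isolatedness from Step 2. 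In the bounded set $U$ each $J_t$ satisfies the (PS) condition, since $DJ_t = \mathrm{Id} - K_t$ with $K_t$ compact (by subcriticality). By the homotopy-invariance of critical groups for $C^1$-families of functionals with a uniformly isolated critical point and uniform local (PS) — see \cite{CHANGKC93}, Theorem I.5.6 (or the Chang--Ghoussoub continuation principle) — we conclude $C_q(J_0, u_0) = C_q(J_1, u_0)$ for every $q \in \Z$, which is the asserted equality $C_q(u_0, I) = C_q(u_0, J)$.

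The main obstacle is the last step: extracting a single $H^1$-neighborhood $U$ which is admissible uniformly along the homotopy $\{J_t\}_{t\in[0,1]}$ and verifying the hypotheses of the continuation principle. Steps 1 and 2 reduce to routine elliptic regularity; Step 3 requires combining that regularity with the abstract Morse-theoretic invariance and is where care is needed to keep all the estimates uniform in the homotopy parameter.
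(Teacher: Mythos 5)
Your Steps 1 and 2 match the paper's argument: elliptic regularity plus the strict inclusion of the range of $u_0$ in $(a,b)$ show that $u_0$ is critical for $J$, and the uniform $C^{1,\alpha}(\overline{\Omega})$ a priori bounds together with Arzel\`a--Ascoli rule out a sequence of critical points of one functional accumulating at $u_0$ in $H^1(\Omega)$ without eventually having range in $(a,b)$ and hence being critical points of the other. For the equality of critical groups, however, you take a genuinely different route. The paper observes that $I$ and $J$ coincide (up to an additive constant) on an entire neighborhood of $u_0$ in the $C^1(\overline{\Omega})$ topology and then quotes the $C^1$-versus-$H^1$ invariance of critical groups (Theorem 5.1.16 of \cite{CHANGKC06}), after which $C_q(u_0,J)=C_q(u_0,I)$ is immediate. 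You instead run the linear homotopy $J_t=(1-t)I+tJ$ and invoke the homotopy invariance of critical groups for a continuous family with a uniformly isolated critical point and uniform local (PS). Your verification of the hypotheses is correct: $u_0$ is critical for every $J_t$, the contradiction/bootstrap argument produces a single neighborhood $U$ containing no other critical point of any $J_t$, the identity-minus-compact structure of $DJ_t$ gives (PS) over $\overline{U}$ uniformly in $t$, and the linear homotopy is continuous in the $C^1(U)$ sense. What each approach buys: the paper's route is shorter because all the analytic difficulty is absorbed into one quotable theorem comparing the $C^1$ and $H^1$ topologies, while yours avoids that theorem entirely at the price of checking the continuation principle's hypotheses uniformly in $t$. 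Two minor remarks: you only need $C^0(\overline{\Omega})$ convergence of the approximating critical points, so the $C^{2,\alpha}$ Schauder step is superfluous (the $C^{1,\alpha}$ bounds cited in the paper already suffice); and in Step 3 you should note, as the paper implicitly must, that the potentials $F$ and $G$ may differ by a constant on $[a,b]$ when $0\notin[a,b]$ --- this is harmless for critical points and critical groups but worth a word.
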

	\begin{proof}
		By standard regularity theory (see \cite{MMP}) $u_0$ is also a critical point of $J$. Suppose, by contradiction, that there exists a sequence of critical points of $J$ $\{u_n\}_{n\in \N}$ converging to $u_0$ in $H^1(\Omega)$. Corollary 8.6 and Theorems 8.10 of \cite{MMpP} implies that there exists a constant $C>0$ such that $\Vert u_n \Vert_{C^{1,\alpha}(\overline{\Omega})}<C$ for some $\alpha>0$, hence, Arzel\'a-Ascoli theorem implies that, up to a subsequence, $u_n\to u$ in the $C^1$ topology. Our hypotheses implies that for $n$ large $f(u_n)=g(u_n)$ implying that each $u_n$ is a critical point of $I$ which contradicts the fact that $u$ is an isolated critical point of $I$.\\
		
		On one hand notice that $J$ and $I$ coincide in a neighbourhood of $u_0$ in the $C^1$ topology, on the other hand Theorem 5.1.16 (and the subsequent remark) of \cite{CHANGKC06}  implies that $C_q(u_0, J|_{C^1})=C_q(u_0, J)$ and $C_q(u_0, I|_{C^1})=C_q(u_0, I)$ for all $q\in \Z$, which concludes the proof.
		
	\end{proof}
	Now we turn into the computation of the global degree for the case when $f$ is asymptotically linear and non resonant.
	\begin{proposition}\label{prop boundedness solutions}
		Let $\Omega$ be a smooth domain in $\R^N$ and let $f:\R\to \R$ be a continuous differentiable function with $f'$ subcritical such that
		\begin{itemize}
			\item $f'(\infty):=\lim\limits_{|t|\to \infty} \frac{f(t)}{t}\in (\lambda_k, \lambda_{k+1})$ where $\lambda_k$ and $\lambda_{k+1}$ are two different consecutive eigenvalues of the Laplacian with homogeneous Neumann boundary condition on $\partial \Omega$.
		\end{itemize}
		Then,  if we define the linear homotopy $h(\lambda,t):=\lambda f'(\infty)t+(1-\lambda)f(t)$, then there exists $R>0$ such that all the critical values of the energy functional $J_\lambda$ associated to $h(\lambda,\cdot)$  belong to $B_R(0)$ every $\lambda\in [0,1]$. Moreover $d(\nabla J,B_R(0),0)=(-1)^k$.
	\end{proposition}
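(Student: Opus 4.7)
The proof breaks cleanly into two independent parts: (i) a uniform a priori bound on the zero set of $\nabla J_\lambda$ along the homotopy, and (ii) a Leray-Schauder degree computation obtained by deforming all the way to the purely linear problem at $\lambda=1$.

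\textbf{Stage 1: uniform a priori bound.} I would argue by contradiction. Suppose there exist $\lambda_n\in[0,1]$ and critical points $u_n$ of $J_{\lambda_n}$ with $\|u_n\|_{H^1(\Omega)}\to\infty$. Writing $f(t)=f'(\infty)\,t+\eta(t)$ with $\eta(t)=o(|t|)$ at infinity, the Euler equation $-\Delta u_n = h(\lambda_n,u_n)$ divided by $\|u_n\|_{H^1(\Omega)}$ gives, for $v_n:=u_n/\|u_n\|_{H^1(\Omega)}$,
\[
-\Delta v_n = f'(\infty)\,v_n + (1-\lambda_n)\,\eta(u_n)/\|u_n\|_{H^1(\Omega)}
\]
under Neumann data. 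Since $\|v_n\|_{H^1(\Omega)}=1$, after passing to a subsequence $v_n\rightharpoonup v_\infty$ weakly in $H^1(\Omega)$ and strongly in $L^2(\Omega)$. A standard estimate $|\eta(t)|\leq\varepsilon|t|+C_\varepsilon$ shows $\eta(u_n)/\|u_n\|_{H^1(\Omega)}\to 0$ in $L^2(\Omega)$. Passing to the limit in the weak formulation yields $-\Delta v_\infty = f'(\infty)\,v_\infty$ with Neumann data; since $f'(\infty)\in(\lambda_k,\lambda_{k+1})$ is not an eigenvalue of the Neumann Laplacian, necessarily $v_\infty\equiv 0$. Finally, testing the displayed equation against $v_n$ itself forces $\|\nabla v_n\|_{L^2}^2\to 0$, and combined with the strong convergence $\|v_n\|_{L^2}\to 0$ this contradicts $\|v_n\|_{H^1}=1$.

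\textbf{Stage 2: degree computation.} Let $R$ be the bound produced by Stage 1. For each $\lambda\in[0,1]$, the gradient $\nabla J_\lambda$ is a compact perturbation of the identity on $H^1(\Omega)$, depending continuously on $\lambda$; because Stage 1 forbids zeros of $\nabla J_\lambda$ on $\partial B_R(0)$, the homotopy is admissible and
\[
d(\nabla J, B_R(0), 0) = d(\nabla J_1, B_R(0), 0).
\]
At $\lambda=1$ the functional $J_1$ is purely quadratic, and its differential is the linear operator $L = I - (1+f'(\infty))\,T$, where $T:H^1(\Omega)\to H^1(\Omega)$ is the compact self-adjoint operator introduced in the proof of Proposition \ref{ nondegenerancy}. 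The eigenvalues of $T$ are $1/(1+\lambda_j)$, counted with multiplicity in the spectrum of $-\Delta$ with Neumann data (using the indexing of Remark \ref{eigenvalues}), so the eigenvalues of $L$ are $(\lambda_j-f'(\infty))/(1+\lambda_j)$; exactly those with $j=1,\ldots,k$ are negative, and the non-resonance hypothesis makes $L$ a linear isomorphism. The Leray-Schauder index formula for a linear map of the form identity-minus-compact then gives $d(\nabla J_1, B_R(0), 0) = (-1)^k$, as claimed.

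\textbf{Main obstacle.} The delicate step is the upgrade from weak to strong $H^1$-convergence of $v_n$ in Stage 1, where the non-resonance assumption on $f'(\infty)$ is used crucially and where the sublinear remainder $\eta(u_n)/\|u_n\|_{H^1(\Omega)}$ must be controlled in $L^2(\Omega)$ uniformly in $\lambda_n\in[0,1]$; once this is in place, Stage 2 is essentially bookkeeping via homotopy invariance and the known spectrum of $T$.
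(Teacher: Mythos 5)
Your proposal is correct and follows essentially the same strategy as the paper: a blow-up/normalization contradiction argument using the non-resonance of $f'(\infty)$ to obtain the uniform bound $R$, followed by homotopy invariance to reduce the degree computation to the linear functional $J_1$, whose Morse index at $0$ is $k$. The only cosmetic difference is the order of the final contradiction in Stage 1 (the paper first rules out $v_\infty=0$ via the normalization identity and then invokes non-resonance, while you first force $v_\infty=0$ by non-resonance and then contradict $\Vert v_n\Vert_{H^1}=1$), which amounts to the same two facts used in reverse.
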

	\begin{proof}
		Let us suppose, by contradiction, that there exists a sequence $\{u_n\}_{n\in \N}$ of critical points of $J_{\lambda_n}$ with $\lambda_n\in [0,1]$ such that $\Vert u_n\Vert_{H^1(\Omega)}\to \infty$ as $n\to \infty$. Hence we have that for every $v\in H^1(\Omega)$
		\begin{equation*}
		\int_{\Omega} \nabla \left(\frac{u_n}{\Vert u_n\Vert}\right)\cdot \nabla v= \int_{\Omega}  \left(\frac{\lambda_n f'(\infty)u_n+(1-\lambda_n)f(u_n)}{\Vert u_n\Vert}\right) v
		\end{equation*}
		Given our assumptions about $f$ we can rewrite it as
		\begin{equation*}
		f(t)=tf'(\infty)+g(t)
		\end{equation*}
		with $g(t)=o(t)$ as $|t|\to \infty$, getting
		\begin{equation*}
		\int_{\Omega} \nabla \left(\frac{u_n}{\Vert u_n\Vert}\right)\cdot \nabla v= f'(\infty)\int_{\Omega}  \left(\frac{u_n}{\Vert u_n\Vert}\right) v+\int_{\Omega}  \left((1-\lambda_n)\frac{g(u_n)}{\Vert u_n\Vert}\right) v
		\end{equation*}
		Given that $H^1(\Omega)$ is reflexive we have that $\left\{\frac{u_n}{\Vert u_n\Vert}\right\}_{n\in \N}$, converges weakly to some $u\in H^1(\Omega)$  up to a subsequence,. On the other hand, given $\varepsilon>0$ there exists $K>0$ such that if $|t|>k$ then $g(t)/t <\varepsilon$, thus
		\begin{align*}
		\left|\int_{\Omega}  \left(\frac{g(u_n)}{\Vert u_n\Vert}\right) v\right|&\leq\left|\int_{|u_n|\leq K}  \left(\frac{g(u_n)}{\Vert u_n\Vert}\right) v\right|+\left|\int_{|u_n|>K}  \left(\frac{g(u_n)}{u_n}\right)\frac{u_n}{\Vert u_n\Vert} v\right|\\
		&C_\Omega\frac{\sup\{ |g(t)| \big| t\in [-K,K]\}}{\Vert u_n\Vert}\Vert v\Vert+\varepsilon\Vert v\Vert.
		\end{align*}
		Hence, we get
		\begin{equation*}
		\int_{\Omega} \nabla u\cdot \nabla v= f'(\infty)\int_{\Omega} u  v, \qquad \forall v\in H^1(\Omega).
		\end{equation*}
		Let us see that $u\neq 0$. Arguing by contraduction let us suppose that $u=0$, this implies that
		\begin{equation*}
		\Vert u_n\Vert_{H^1(\Omega)}^2=\int_{\Omega} \left(\lambda_n f'(\infty)u_n+(1-\lambda_n)f(u_n)+1\right)u_n
		\end{equation*}
		thereby, by the Cauchy-Schwartz inequality we get 
		\begin{equation*}
		1=\int_{\Omega} \left(\dfrac{f(u_n)+1}{\Vert u_n\Vert}\right)\frac{u_n}{\Vert u_n\Vert}\leq \left(\lambda_n f'(\infty)\left\Vert \dfrac{u_n}{\Vert u_n\Vert} \right\Vert_{L^2(\Omega)} +\left\Vert \dfrac{(1-\lambda_n)g(u_n)+1}{\Vert u_n\Vert} \right\Vert_{L^2(\Omega)}\right)\left\Vert \dfrac{u_n}{\Vert u_n\Vert} \right\Vert_{L^2(\Omega)}.
		\end{equation*}
		Notice that our previous estimates show, in fact, that  $\Vert \dfrac{g(u_n)}{\Vert u_n\Vert} \Vert_{L^2(\Omega)}\to 0$ as $n\to \infty$. On the other hand, since $\dfrac{u_n}{\Vert u_n\Vert}$ converges strongly to 0 in $L^2(\Omega)$, up to a subsequence, we get
		\begin{equation*}
		1\leq \left( f'(\infty)\left\Vert \dfrac{u_n}{\Vert u_n\Vert} \right\Vert_{L^2(\Omega)} +\left\Vert \dfrac{(1-\lambda_n)g(u_n)+1}{\Vert u_n\Vert} \right\Vert_{L^2(\Omega)}\right)\left\Vert \dfrac{u_n}{\Vert u_n\Vert} \right\Vert_{L^2(\Omega)}\to 0,
		\end{equation*}
		as $n\to \infty$.\\
		
		The fact that $u\neq 0$ leads us to the desired contradition given that $f'(\infty)$ is not an eigenvalue of our problem.\\
		
		Given the invariance under homotopy of the degree (Theorem 3.3.1 of \cite{KESAVAN}) it suffices to compute the degree of $\nabla J_1$ in $B_R(0)$ with respect to $0$. Notice that the only critical point of $J_1$ is $0$, hence it suffices to find the number of negative eigenvalues of $0$. Since
		\begin{equation*}
		D^2J_1(0)(u,v)=\langle\mu u, v\rangle \implies (1-\mu)\langle u, v\rangle=(f'(\infty)+1)\int_{\Omega} uv
		\end{equation*}
		It follows that the negative eigenvalues of $D^2J_1(0)$ satisfies 
		\begin{equation*}
		\frac{f'(\infty)+1}{1-\mu}=1+\lambda_i \implies \mu=\frac{\lambda_i-f'(\infty)}{1+\lambda_i}.
		\end{equation*}
		Hence, by our assumptions ($f'(\infty)>\lambda_k$) it follows that $d(\nabla J,B_R(0),0)=(-1)^k$.
	\end{proof}
	
	\begin{proof}[Proof of the first multiplicity result]
		Our hypotheses implies that there exist 5 trivial solutions $m_1,m_2,a_1,a_2$ and $a_3$ . Theorem truncated interval implies the existence of two nontrivial solutions $u_1$ and $u_2$ with range contained in $(-\infty, m_1)$ and $( m_2, \infty)$, respectivelly. Analogously, Theorem truncated interval2 gurantees the existence of a nontrivial solution $u_3$ which range is contained in $(m_1, m_2)$.\\
		
		Since the critical points of $J$ are isolated we can find for each critical point $c$ a ball $B_c$ containing it and no other critical point.  \textit{Corollary} \ref{corollary crticial groups mp} and \textit{Lemma} \ref{lemma invariance of critical groups} implies that $d(\nabla J,B_{u_i},0)=-1$ for $i=1,2,3$. On the other hand, our assumptions implies that $d(\nabla J,B_{m_i},0)=1$ for $i=1,2$ and that $d(\nabla J,B_{a_i},0)=(-1)^{k_i}$ for $i=1,2,3$, with at least one $k_i=k$. Arguing by contradiction, let us suppose that there are not more solutions. The excition property of the Leray-Schauder degree implies that
		\begin{equation*}
		(-1)^k=d(\nabla J,B_{R},0)=\sum_{i=1}^{3}d(\nabla J,B_{u_i},0)+\sum_{i=1}^{2}d(\nabla J,B_{m_i},0)+\sum_{i=1}^{3}d(\nabla J,B_{a_i},0)=-1+\sum_{i=1}^{3}(-1)^{k_i}.
		\end{equation*} 
		Thus, it exists integers $a$ and $b$ such that
		\begin{equation*}
		1=(-1)^a+(-1)^b
		\end{equation*} 
		which is clearly a contradiction. Finally, there exists at least another nontrival critical point of $J$.
	\end{proof}
	
	\section{Lyapunov-Schmidt reduction method}
	\subsection{Important things to be mentioned somewhere before this section.}
		\begin{enumerate}
			\item Let $\Omega \subset \R$ be a bounded domain with smooth boundary.
			\item A variational formulation of the problem (*) leads to an energy functional of the form:
				\begin{equation}\label{Functional}
					J\left(u\right)=\int_{\Omega}{\left(|\nabla u|^2 - F\left(u\right)\right)}\,dx,
				\end{equation}				 
				where $F\left(t\right) \eqdef \int_{0}^{t}{f\left(s\right)}\,ds$.
			\item For a proof of the next result, see \cite{CH}.
			\begin{lemma} \label{DegCrit}
					Let $H$ be a real Hilbert space, and let $J \in C^2\left(H,\R\right)$ be a function satisfying the (PS) condition. Assume that $\nabla J\left(x\right)=I-T$, where $T$ is a compact mapping, and $u_0$ is an isolated critical point of $J$. Then we have
						\begin{equation*}
							d\left(\nabla J,u_0\right)=\sum_{q=0}^{\infty}{\left(-1\right)^q\dim{C_q\left(J,u_0\right)}}.
						\end{equation*}					
                   \end{lemma}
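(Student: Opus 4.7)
The plan is to identify both sides of the claimed identity with the Euler--Poincar\'{e} characteristic of a Gromoll--Meyer pair at $u_0$, thereby factoring the proof into a Morse-theoretic computation and a Leray--Schauder degree computation.

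\textbf{Setup and reduction to a Gromoll--Meyer pair.} Since $u_0$ is isolated, choose $\rho>0$ so that $\overline{B_\rho(u_0)}$ contains no other critical point of $J$; this makes $d(\nabla J, u_0) \eqdef d(\nabla J, B_\rho(u_0), 0)$ well-defined as a Leray--Schauder degree. Using the (PS) condition together with the isolation of $u_0$, one constructs (as in Chang, Chapter I) a Gromoll--Meyer pair $(W, W^-)$ inside $B_\rho(u_0)$: a closed neighborhood $W$ of $u_0$ whose only critical point is $u_0$, together with its ``exit set'' $W^-$ for the negative gradient flow. By the standard equivalent description of critical groups,
\begin{equation*}
C_q(J, u_0) \;\cong\; H_q(W, W^-), \qquad q \geq 0,
\end{equation*}
and, because $J \in C^2$ with $\nabla J = I - T$ and $T$ compact, these groups are finitely generated and vanish for $q$ large. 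Hence the right-hand side of the claim equals $\chi(W, W^-)$.

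\textbf{Identifying the degree with the same characteristic.} The next step is to show
\begin{equation*}
d(\nabla J, u_0) \;=\; \chi(W, W^-).
\end{equation*}
The strategy is a Marino--Prodi type perturbation: produce $\tilde J \in C^2(H,\R)$, arbitrarily $C^2$-close to $J$ on $\overline{W}$, coinciding with $J$ near $\partial W$ and outside a slightly larger neighborhood, with $\nabla \tilde J$ still of the form identity minus compact, and having only finitely many nondegenerate critical points $v_1, \dots, v_m$ inside $W$. Homotopy invariance and the excision/additivity properties of the Leray--Schauder degree give
\begin{equation*}
d(\nabla J, u_0) \;=\; d(\nabla \tilde J, W, 0) \;=\; \sum_{i=1}^{m} (-1)^{j(v_i)},
\end{equation*}
where $j(v_i)$ is the Morse index at $v_i$; the first equality holds because the perturbation is admissible along the homotopy (no critical points escape $\partial W$), and the second is the classical degree-equals-index formula for nondegenerate zeros of an $I - T$ gradient. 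On the other hand, applying Morse theory to $\tilde J$ on the pair $(W, W^-)$ yields
\begin{equation*}
\sum_{i=1}^{m} (-1)^{j(v_i)} \;=\; \chi(W, W^-),
\end{equation*}
either via the Morse inequalities applied to the corresponding CW-type filtration or via successive handle attachments along the flow of $-\nabla \tilde J$. Combining with the previous paragraph gives the claimed formula.

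\textbf{Main obstacle.} The delicate ingredient is the perturbation step: one must produce $\tilde J$ that is simultaneously Morse on $W$, has gradient of the form identity minus compact (so the Leray--Schauder degree is defined), and is close enough to $J$ that the $(W, W^-)$ pair remains a Gromoll--Meyer pair for the homotopy. This is precisely the content of the Marino--Prodi perturbation theorem on Hilbert spaces, which is tailored to gradient fields of $I - T$ type; an alternative route is to perform a Lyapunov--Schmidt reduction (justified by compactness of $T$ and the Fredholm nature of $D^2 J(u_0) = I - T'(u_0)$) onto the finite-dimensional sum of the negative and zero eigenspaces of $D^2 J(u_0)$, where both the critical groups and the local degree are preserved and the Poincar\'e--Hopf formula in finite dimensions closes the argument.
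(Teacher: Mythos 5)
The paper offers no proof of this lemma; it simply cites Chang's book, where the result is the infinite-dimensional Poincar\'e--Hopf formula. Your argument --- reducing both sides to the Euler characteristic of a Gromoll--Meyer pair and handling the degree side via a Marino--Prodi perturbation to a nearby Morse function with gradient of identity-minus-compact type --- is precisely the standard proof given in that reference, and it is correct.
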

		\end{enumerate}	
	\subsection{Preliminaries and notation.}
	%\large{\bfseries Preliminaries and notation}\\								
%		\subsection{Spectrum}
			The set of eigenvalues of $-\Delta$ with homogeneous Neumann boundary condition on $\partial \Omega$ can be written as an increasing non-negative sequence $\left\{\lambda_i\right\}_{i=1}^{\infty}$ such that $\mathop {\lim }\limits_{i \to \infty } {\lambda _i} = \infty$. In addition, if $\left\{\varphi_i\right\}_{i=1}^{\infty}$ denotes the corresponding sequence of eigenfunctions and $E_i$ stands for the eigenspace generated by $\varphi_i$,  the following are well-known facts (see, for example, Motreanu-Papageorgiou \cite{MMP}):
			\begin{enumerate}
				\item $\lambda_1=0$ is simple and $E_1$ is formed exclusively by constant functions.
				\item $\left\{\varphi_i\right\}_{i=1}^{\infty}$ is an orthogonal basis of $L^2\left(\Omega\right)$ and $H^1\left(\Omega\right)$, respectively.
%				Let $X \eqdef \bigoplus_{i=1}^{k}E_i$
				\item Let $X$ denote the subspace spanned by $\left\{\varphi_1,\varphi_2,\cdots,\varphi_k\right\}$ and $Y$ its orthogonal complement in $H^1\left(\Omega\right)$. As a consequence of the variational characterization of the eigenvalues we have the two Poincar\'e-like inequalities:
					\begin{equation}\label{PoincareX}
						\Vert \nabla v\Vert_{L^2(\Omega)}^2\leq \lambda_k \Vert v\Vert_{L^2(\Omega)}^2 \quad \forall v\in X,
					\end{equation}
				and 
					\begin{equation}\label{PoincareY}
						\Vert \nabla v\Vert_{L^2(\Omega)}^2\geq \lambda_{k+1} \Vert v\Vert_{L^2(\Omega)}^2 \quad \forall v \in Y.
					\end{equation}
		\end{enumerate}	 
			 	
		Now, we introduce the so-called Lyapunov-Schmidt reduction method.  Under appropriate conditions this technique provides a general procedure to transform a variational infinite-dimensional problem into an equivalent (often easy-to-solve) finite-dimensional  problem. For further discussion, see Castro \cite{C}, Castro-Lazer \cite{CL} and the references therein.
		
		\begin{theorem}\label{LS-T}
			Let $H$ be a real Hilbert space. Let $X$ and $Y$ be closed subspaces of $H$  such that $H=X\oplus Y$. Assume that $J: H \to \R$ is a functional of class $C^1$. If there is a constant $m>0$ such that
			\begin{equation}\label{RedHyp}
				\left\langle \nabla J\left(x+y_1\right)-\nabla J\left(x +y_2\right) ,y_1-y_2\right\rangle \geq m \Vert y_1-y_2\Vert^2 \quad \mathrm{for \; all} \quad x \in X, \; y_1,y_2 \in Y,
			\end{equation}
			then: 
			\begin{enumerate}[leftmargin=*,label=(\roman*)]
				\item \label{RedCont} there exists a continuous function $\psi :X\to Y$ such that 
					\begin{equation*}
						J\left(x+\psi\left(x\right)\right)=\min_{y\in Y}{J\left(x+y\right)}.							
					\end{equation*}	
				\item The function 
					\begin{alignat*}{2}
						\widetilde{J}:\, & X  && \longrightarrow \R\\
			                 			 & x  && \longrightarrow \widetilde{J}\left(x\right) \eqdef J\left(x+\psi\left(x\right)\right)
					\end{alignat*}								
					is of class $C^1$, and
					\begin{equation*} 
						\left\langle \nabla \widetilde{J}\left(x_1\right),x_2\right\rangle=\left\langle\nabla J\left(x_1+\psi\left(x_1\right)\right),x_2\right\rangle \quad \mathrm{for \; all} \quad x_1,x_2 \in X.
					\end{equation*}
					Moreover, $\psi\left(x\right) \in Y$ is the unique element satisfying 
					\begin{equation*}
						\left\langle\nabla J\left(x+\psi\left(x\right)\right),y\right\rangle=0 \quad \mathrm{for \; all} \quad y \in Y.
					\end{equation*}
				\item \label{CritPoint} An element $x_0\in X$ is a critical point of $\widetilde{J}$ if and only if $u_0\eqdef x_0+\psi\left(x_0\right)$ is a critical point of $J$.
			\end{enumerate}
		\end{theorem}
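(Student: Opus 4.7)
The plan is to build $\psi$ by direct minimization in $Y$. For each fixed $x\in X$, the hypothesis \eqref{RedHyp}, integrated along the segment $y_2+t(y_1-y_2)$, yields the strong convexity estimate
\[
J(x+y_1)\;\geq\; J(x+y_2)+\langle\nabla J(x+y_2),y_1-y_2\rangle+\tfrac{m}{2}\|y_1-y_2\|^2 \qquad (y_1,y_2\in Y).
\]
Hence $y\mapsto J(x+y)$ is strictly convex and coercive on the closed subspace $Y$, so a unique minimizer $\psi(x)\in Y$ exists, characterized by the first-order condition $\langle\nabla J(x+\psi(x)),y\rangle=0$ for every $y\in Y$. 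Uniqueness of $\psi(x)$ among zeros of $y\mapsto\langle\nabla J(x+y),\cdot\rangle\big|_Y$ is immediate from the same strict convexity.

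For the continuity of $\psi$ claimed in (i), I would subtract the Euler--Lagrange identities at $x_1,x_2\in X$, test against $\psi(x_1)-\psi(x_2)\in Y$, and add/subtract $\nabla J(x_1+\psi(x_2))$. The monotonicity bounds one of the resulting terms from below by $m\|\psi(x_1)-\psi(x_2)\|^2$ while Cauchy--Schwarz controls the other, giving
\[
m\,\|\psi(x_1)-\psi(x_2)\| \;\leq\; \|\nabla J(x_1+\psi(x_2))-\nabla J(x_2+\psi(x_2))\|.
\]
Continuity of $\nabla J$ then delivers continuity of $\psi$, and in fact a local Lipschitz estimate when $\nabla J$ is Lipschitz on bounded sets.

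For (ii), I would compute the difference quotient of $\widetilde J$ by splitting
\[
\widetilde J(x+h)-\widetilde J(x) \;=\; \bigl[J(x+h+\psi(x+h))-J(x+h+\psi(x))\bigr]+\bigl[J(x+h+\psi(x))-J(x+\psi(x))\bigr].
\]
The second bracket is $\langle\nabla J(x+\psi(x)),h\rangle+o(\|h\|)$ directly from the $C^1$ property of $J$. For the first bracket, the mean value theorem yields $\langle\nabla J(\xi_h),\psi(x+h)-\psi(x)\rangle$ with $\xi_h\to x+\psi(x)$; here I use the crucial fact that $\nabla J(x+\psi(x))$ \emph{annihilates $Y$} while the increment $\psi(x+h)-\psi(x)$ lies in $Y$, so the leading contribution vanishes and the remaining piece is dominated by $\|\nabla J(\xi_h)-\nabla J(x+\psi(x))\|\cdot\|\psi(x+h)-\psi(x)\|=o(\|h\|)$ thanks to the continuity estimate above. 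This gives $\langle\nabla\widetilde J(x),h\rangle=\langle\nabla J(x+\psi(x)),h\rangle$ for $h\in X$, with $C^1$-regularity following from continuity of $\psi$ and $\nabla J$.

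Claim (iii) is then immediate: if $x_0$ is critical for $\widetilde J$, the formula from (ii) says $\nabla J(x_0+\psi(x_0))$ annihilates $X$, while by construction it annihilates $Y$, hence vanishes on $H=X\oplus Y$; conversely, any critical $u_0=x_0+y_0$ of $J$ automatically satisfies the defining equation of $\psi(x_0)$ on $Y$, so $y_0=\psi(x_0)$ by uniqueness, whence $x_0$ is critical for $\widetilde J$. I expect the main obstacle to be the $C^1$-regularity of $\widetilde J$: because $\psi$ is only known to be continuous the naive chain rule is unavailable, and the argument must rely crucially on the $Y$-orthogonality of $\nabla J(x+\psi(x))$ to absorb the first-order contribution of $\psi(x+h)-\psi(x)$ into $o(\|h\|)$.
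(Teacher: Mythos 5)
The paper itself does not prove Theorem \ref{LS-T}: it is quoted as a known result from Castro \cite{C} and Castro--Lazer \cite{CL}, so your proposal can only be measured against the standard argument. Your treatment of the existence, uniqueness and continuity of $\psi$, and of part (iii), is correct: the integrated strong-convexity inequality, the Euler--Lagrange characterization of $\psi(x)$, and the estimate $m\|\psi(x_1)-\psi(x_2)\|\le\|\nabla J(x_1+\psi(x_2))-\nabla J(x_2+\psi(x_2))\|$ are exactly the right ingredients.

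The gap is in the differentiability of $\widetilde J$, at the very step you yourself flag as the main obstacle. Your bound for the first bracket is
\[
\bigl|\langle\nabla J(\xi_h)-\nabla J(x+\psi(x)),\,\psi(x+h)-\psi(x)\rangle\bigr|\le\|\nabla J(\xi_h)-\nabla J(x+\psi(x))\|\,\|\psi(x+h)-\psi(x)\|,
\]
and you assert this is $o(\|h\|)$ ``thanks to the continuity estimate above''. But that estimate only gives $\|\psi(x+h)-\psi(x)\|\le m^{-1}\|\nabla J(x+h+\psi(x))-\nabla J(x+\psi(x))\|$, which is $o(1)$, not $O(\|h\|)$: the theorem assumes only $J\in C^1$, so $\nabla J$ need not be locally Lipschitz, and a product of two factors that merely tend to zero need not be $o(\|h\|)$ (both could behave like $\|h\|^{1/3}$, say, giving a product of order $\|h\|^{2/3}$). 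The standard repair --- and the one in Castro--Lazer --- is to squeeze $\widetilde J(x+h)-\widetilde J(x)$ between the two minimality inequalities: from above by $J(x+h+\psi(x))-J(x+\psi(x))$, because $\psi(x+h)$ minimizes the slice at $x+h$, and from below by $J(x+h+\psi(x+h))-J(x+\psi(x+h))$, because $\psi(x)$ minimizes the slice at $x$. Writing the lower bound as $\int_0^1\langle\nabla J(x+th+\psi(x+h)),h\rangle\,dt$, the gradient error is now paired with the increment $h$ itself, so continuity of $\nabla J$ and of $\psi$ give an error of size $o(1)\cdot\|h\|=o(\|h\|)$ on both sides. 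With that substitution the rest of your argument, including the identification of $\nabla\widetilde J$ and part (iii), goes through.
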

		 Throughout this section, $X$ will denote the vector space spanned by $\left\{\varphi_1,\varphi_2,\cdots,\varphi_k\right\}$ and $Y$ its orthogonal complement in $H^1\left(\Omega\right)$. Similarly, $J:H^1\left(\Omega\right)\to \R$, $\widetilde{J}:X\to \R$ and $\psi:X\to Y$ will denote the functions given by Theorem \ref{LS-T}.\\
		 		 
        The next proposition provides sufficient conditions to ensure the conclusions of the previous theorem.
		\begin{proposition}\label{RedHypoFun}
			Let $f:\R\to \R$ be a differentiable function. If there exists a constant $\gamma$ such that $f'\left(t\right) \leq \gamma < \lambda_{k+1}$ for some eigenvalue $\lambda_{k+1}$ of $-\Delta$ with Neumann boundary condition, then $J$ verifies the hypothesis of the Theorem \ref{LS-T}.
		\end{proposition}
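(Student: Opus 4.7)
The plan is a direct verification of the monotonicity condition (\ref{RedHyp}), combining the mean value theorem applied to $f$ with the Poincar\'e-type inequality (\ref{PoincareY}) available on $Y$. First I would fix $x \in X$ and $y_1, y_2 \in Y$, set $z \eqdef y_1 - y_2 \in Y$, and compute directly from \eqref{Functional}:
\begin{equation*}
\bigl\langle \nabla J(x+y_1) - \nabla J(x+y_2), z \bigr\rangle = 2\|\nabla z\|_{L^2(\Omega)}^2 - \int_\Omega \bigl(f(x+y_1)-f(x+y_2)\bigr)\,z\, dx.
\end{equation*}
By the mean value theorem, $f(x+y_1)-f(x+y_2) = f'(\xi)\,z$ pointwise for some intermediate value $\xi$, and the hypothesis $f'\leq \gamma$ forces $\bigl(f(x+y_1)-f(x+y_2)\bigr)z \leq \gamma z^2$ almost everywhere in $\Omega$; integrating yields
\begin{equation*}
\bigl\langle \nabla J(x+y_1) - \nabla J(x+y_2), z \bigr\rangle \geq 2\|\nabla z\|_{L^2(\Omega)}^2 - \gamma\|z\|_{L^2(\Omega)}^2.
\end{equation*}

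Next, because $z \in Y$, inequality (\ref{PoincareY}) supplies $\|\nabla z\|_{L^2}^2 \geq \lambda_{k+1}\|z\|_{L^2}^2$. To obtain a lower bound in terms of the full norm $\|z\|_{H^1}^2 = \|\nabla z\|_{L^2}^2 + \|z\|_{L^2}^2$, I would split the gradient term as $2\|\nabla z\|_{L^2}^2 = \theta\|\nabla z\|_{L^2}^2 + (2-\theta)\|\nabla z\|_{L^2}^2$ for a parameter $\theta$ to be chosen, keep the first summand intact, and use (\ref{PoincareY}) only on the second. The result is
\begin{equation*}
\bigl\langle \nabla J(x+y_1) - \nabla J(x+y_2), z \bigr\rangle \geq \theta\|\nabla z\|_{L^2(\Omega)}^2 + \bigl((2-\theta)\lambda_{k+1} - \gamma\bigr)\|z\|_{L^2(\Omega)}^2.
\end{equation*}

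Finally, I would select $\theta$ so that both coefficients are strictly positive; letting $m$ be their minimum then gives the desired bound $m\|y_1-y_2\|_{H^1(\Omega)}^2$ and closes the verification of (\ref{RedHyp}). The first coefficient is positive as soon as $\theta > 0$, and the second as soon as $\theta < 2 - \gamma/\lambda_{k+1}$, so the window for $\theta$ is nonempty precisely because $\gamma < \lambda_{k+1}$. The argument is essentially routine; the one delicate feature is this last balance, in which the strict spectral gap $\gamma < \lambda_{k+1}$ is used in an essential way, consistent with the resonance obstructions discussed earlier in the paper.
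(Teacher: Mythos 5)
Your argument is correct and follows essentially the same route as the paper's proof: bound the nonlinear term by $\gamma\,(y_1-y_2)^2$ via the mean value theorem and the hypothesis $f'\leq\gamma$, then absorb it using the Poincar\'e-type inequality \eqref{PoincareY} on $Y$ to produce an explicit $m>0$; your $\theta$-splitting is just a slightly different bookkeeping of the same absorption (the paper instead bounds $\gamma\Vert z\Vert_{L^2}^2\leq(\gamma/\lambda_{k+1})\Vert\nabla z\Vert_{L^2}^2$ and then passes to the full $H^1$ norm, obtaining $m=(\lambda_{k+1}-\gamma)/(1+\lambda_{k+1})$). The factor $2$ versus $1$ on the gradient term merely reflects the paper's own inconsistency about the $\tfrac{1}{2}$ in \eqref{Functional} and does not affect the conclusion.
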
		
		\begin{proof}
			Fix elements $x\in X$ and $y_1,y_2 \in Y$. In order to prove the desired result, we proceed as follows:
			\begin{align*}
				\left\langle \nabla J\left(x+y_1\right)-\nabla J\left(x +y_2\right) ,y_1-y_2\right\rangle_{H^1(\Omega)}^2 &=\int_{\Omega}{\left(|\nabla(y_1-y_2)|^2-(f(x+y_1)-f(x+y_2))(y_1-y_2)\right)}\, dx\\
		&\geq \int_{\Omega}{\left(|\nabla(y_1-y_2)|^2-\gamma(y_1-y_2)^2\right)} \, dx \\ 
		&\geq \left(1-\frac{\gamma}{\lambda_{k+1}}\right)\frac{\lambda_{k+1}}{1+\lambda_{k+1}}\Vert y_1-y_2\Vert_{H^1(\Omega)}^2.
			\end{align*}
			Then \eqref{RedHyp} holds with $m=\left(\lambda_{k+1}-\gamma\right)/\left(1+\lambda_{k+1}\right)$.
		\end{proof}
		\subsection{Properties.}
		The following result describes a local property of $\widetilde{J}$ in connection with trivial solutions of the problem.
		\begin{proposition} \label{MaxMinTriv}
			Under the assumptions of Proposition \ref{RedHypoFun}, if we additionally suppose that $\alpha \in \R$ is such that $f\left(\alpha\right)=0$ and $f'\left(\alpha\right)\in \left(\lambda_{\ell},\lambda_{\ell+1}\right)$ with $\ell<k$, then $\alpha$ is a strict local maximizer of $\widetilde{J}$ on $\bigoplus_{i=1}^{\ell} E_i$ and $\alpha$ is a strict local minimizer of $\widetilde{J}$ on $\bigoplus_{\ell+1}^{k} E_i$.
		\end{proposition}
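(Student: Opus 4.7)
The plan is to perform a second-order Taylor expansion of $J$ at $\alpha$ (viewed as the constant function in $E_1\subset X$) and to separate the two cases using the Poincar\'e-type inequalities \eqref{PoincareX}--\eqref{PoincareY}, together with the fact that $X$ and $Y$ are orthogonal in both $L^2(\Omega)$ and $H^1(\Omega)$. The starting observation is that $f(\alpha)=0$ makes the constant $\alpha$ a classical solution of the Neumann problem, hence a critical point of $J$ on $H^1(\Omega)$; in particular $\langle\nabla J(\alpha),y\rangle=0$ for every $y\in Y$, so the uniqueness statement in Theorem \ref{LS-T} forces $\psi(\alpha)=0$, $\widetilde J(\alpha)=J(\alpha)$, and (by Theorem \ref{LS-T}\ref{CritPoint}) $\alpha$ is also a critical point of $\widetilde J$. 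Using $f(\alpha)=0$ and $f\in C^1$, Taylor's formula for $F$ at $\alpha$ yields
\[
J(\alpha+u)-J(\alpha)=\tfrac{1}{2}\bigl(\|\nabla u\|_{L^2}^2-f'(\alpha)\|u\|_{L^2}^2\bigr)+R(u),
\]
with $R(u)$ small compared to $\|u\|_{H^1}^2$ when $\|u\|_{H^1}$ is small.

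For the maximum on $\bigoplus_{i=1}^{\ell}E_i$ I would use the easy half of the reduction, $\widetilde J(\alpha+v)=\min_{y\in Y}J(\alpha+v+y)\le J(\alpha+v)$. On the finite-dimensional subspace $\bigoplus_{i=1}^{\ell}E_i$, an application of \eqref{PoincareX} with $\lambda_\ell$ in place of $\lambda_k$ gives $\|\nabla v\|^2\le\lambda_\ell\|v\|^2$; combined with $f'(\alpha)>\lambda_\ell$ this makes the quadratic form in the Taylor expansion strictly negative for $v\ne 0$, while finite-dimensionality (equivalence of $L^2$, $H^1$ and $L^\infty$ norms) trivially absorbs $R(v)$. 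Consequently $\widetilde J(\alpha+v)\le J(\alpha+v)<J(\alpha)=\widetilde J(\alpha)$ for small nonzero $v\in\bigoplus_{i=1}^{\ell}E_i$.

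For the minimum on $\bigoplus_{i=\ell+1}^{k}E_i$ the one-sided comparison goes the wrong way, so I would expand $\widetilde J(\alpha+w)=J(\alpha+u)$ with $u:=w+\psi(\alpha+w)$ directly. Since $\{\varphi_i\}$ is orthogonal in both $L^2(\Omega)$ and $H^1(\Omega)$, the decomposition of $u$ into its $X$-component $w$ and its $Y$-component $\psi(\alpha+w)$ is orthogonal in both inner products, and the quadratic part splits as
\[
\tfrac{1}{2}\bigl(\|\nabla w\|^2-f'(\alpha)\|w\|^2\bigr)+\tfrac{1}{2}\bigl(\|\nabla\psi\|^2-f'(\alpha)\|\psi\|^2\bigr).
\]
The variational characterization of the eigenvalues on $\bigoplus_{i=\ell+1}^{k}E_i$ yields $\|\nabla w\|^2\ge\lambda_{\ell+1}\|w\|^2$, and \eqref{PoincareY} gives $\|\nabla\psi\|^2\ge\lambda_{k+1}\|\psi\|^2$; since $f'(\alpha)<\lambda_{\ell+1}\le\lambda_{k+1}$, both brackets are coercive, and their sum is bounded below by $c\|u\|_{H^1}^2$ for some $c>0$.

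The main obstacle is to control $R(u)$ in this second case: unlike in the maximum analysis, $u$ now has the infinite-dimensional component $\psi(\alpha+w)\in Y$, so no $L^\infty$ bound on $u$ is available. I would close the estimate by invoking the subcriticality of $f'$: for every $\epsilon>0$ one has $\bigl|F(\alpha+t)-F(\alpha)-\tfrac{f'(\alpha)}{2}t^2\bigr|\le\epsilon t^2+C_\epsilon|t|^{q+2}$ with some $q+2<2^*$, and the Sobolev embedding $H^1(\Omega)\hookrightarrow L^{q+2}(\Omega)$ then yields $|R(u)|\le\epsilon\|u\|_{L^2}^2+C_\epsilon\|u\|_{H^1}^{q+2}$. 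The continuity of $\psi$ at $\alpha$ guaranteed by Theorem \ref{LS-T}\ref{RedCont} forces $\|u\|_{H^1}\to 0$ as $\|w\|\to 0$, so the $\|u\|_{H^1}^{q+2}$ contribution is of strictly lower order; taking $\epsilon<c$ produces a strict bound $\widetilde J(\alpha+w)-\widetilde J(\alpha)\ge c'\|w\|_{H^1}^2>0$ for small $w\ne 0$, which is the desired strict local minimum.
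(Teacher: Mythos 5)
Your proof is correct and follows essentially the same route as the paper: $\psi(\alpha)=0$ and $\widetilde J(\alpha)=J(\alpha)$, the one-sided comparison $\widetilde J(\alpha+v)\le J(\alpha+v)$ combined with the finite-dimensional Taylor/Poincar\'e estimate for the maximum on $\bigoplus_{i=1}^{\ell}E_i$, and a coercivity lower bound for $J(\alpha+w+\psi(\alpha+w))$ split orthogonally between $\bigoplus_{i=\ell+1}^{k}E_i$ and $Y$ for the minimum. The one place you go beyond the paper is where it handles the minimum half with ``proceeding similarly'': you correctly observe that the perturbation $w+\psi(\alpha+w)$ now has an infinite-dimensional component, so the $L^{\infty}$ control available in the finite-dimensional case is lost, and you close that gap with the subcriticality/Sobolev (Chebyshev--H\"older) estimate of the Taylor remainder --- which is exactly the device the paper itself deploys in its earlier propositions, so the two arguments are in substance identical.
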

		\begin{proof}
			Applying Taylor's theorem to $F\left(t\right)$, we can find positive constants $\varepsilon_1$ and $\beta>\lambda_{\ell}$ such that 
			\begin{equation*}
				F\left(\alpha+t\right)=F\left(\alpha\right)+\left(\frac{f'\left(\alpha\right)}{2}+\frac{o\left(t^2\right)}{t^2}\right)t^2 \geq F\left(\alpha\right)+\frac{\beta}{2}t^2
			\end{equation*}
			for all $0<|t|<\varepsilon_1$. Choose any $u \in \bigoplus_{i=1}^{\ell} E_i$ with $\Vert{u}\Vert_{H^1\left(\Omega\right)}<\varepsilon_1$. If we define $c'= \frac{1}{2}\left(\lambda_{\ell}-\beta\right)/\left(\lambda_{\ell}+1\right)<0$, then
			\begin{equation*}
				J\left(u+\alpha\right)\leq J\left(\alpha\right)+\frac{1}{2}\int_{\Omega}{|\nabla u|^2}\,dx-\frac{\beta}{2}\int_{\Omega}{u^2}\,dx \leq J\left(\alpha\right)+\frac{1}{2}\left(\lambda_{\ell}-\beta\right)\Vert{u}\Vert_{L^2\left(\Omega\right)}^{2}=J\left(\alpha\right)+c'\Vert{u}\Vert^2_{H^1\left(\Omega\right)}.
			\end{equation*}
			Hence, from Theorem \ref{LS-T}\ref{RedCont} and the previous inequality 
			\begin{equation*}
				\widetilde{J}\left(\alpha+u\right)\leq J\left(\alpha+u\right)\leq \widetilde{J}\left(\alpha\right)+c'\Vert{u}\Vert^2_{H^1\left(\Omega\right)}
			\end{equation*}
			since $\widetilde{J}\left(\alpha\right)=J\left(\alpha\right)$. This shows $\alpha$ is a strict local maximizer of $\widetilde{J}$ on $\bigoplus_{i=1}^{\ell} E_i$. Proceeding similarly, it can be shown that for some $\varepsilon_2>0$ and every $u\in \bigoplus_{\ell+1}^{k} E_i$ satisfying $\Vert{u}\Vert_{H^1\left(\Omega\right)}<\varepsilon_2$ we may find a constant $c>0$ such that
			\begin{equation*}
				J\left(u+\alpha\right)\geq J\left(\alpha\right)+c\Vert{u}\Vert^2_{H^1\left(\Omega\right)}.
			\end{equation*}		
		On the other hand, from the continuity of $\psi: X \to Y$ and the fact that $\psi\left(\alpha\right)=0$, we may write $\Vert{u+\psi\left(\alpha+u\right)}\Vert<\varepsilon_2$ for some $2\eta<\varepsilon_2$ and any $u \in \bigoplus_{i=\ell+1}^{k} E_i$ with $\Vert{u}\Vert_{H^1\left(\Omega\right)}<\eta$. Consequently, 
		\begin{align*}
			\widetilde{J}\left(\alpha+u\right)=J\left(\alpha+u+\psi\left(\alpha+u\right)\right)&\geq J\left(\alpha\right)+c\Vert{u+\psi\left(\alpha+u\right)}\Vert_{H^1\left(\Omega\right)}\\&\geq \widetilde{J}\left(\alpha\right)+c\Vert{u}\Vert^2_{H^1\left(\Omega\right)}+c\Vert{\psi\left(\alpha+u\right)}\Vert^2_{H^1\left(\Omega\right)}\\& \geq \widetilde{J}\left(\alpha\right)+c\Vert{u}\Vert^{2}_{H^1\left(\Omega\right)}
		\end{align*}		
		which shows the result.
		\end{proof}		
		
	\begin{definition}\label{mp}
		Let $X$ be a real Banach space and $U \subset X$ a nonempty open set. If  $J\in C^1(U;\R)$ and $u_0$ is a critical point of $J$, then $u_0$ is called of mountain pass type (mp type from now on) if there exists an open neighborhood $W \subset U$ of $u_0$ such that for every open neighborhood $V \subset W$ of $u_0$, $J^{-1}\left(-\infty,J\left(u_0\right)\right)\cap V \neq \varnothing$ and $J^{-1}\left(-\infty,J\left(u_0\right)\right)\cap V$ is not path-connected.
	\end{definition}

	The next theorem explores essential properties regarding the Lyapunov-Schmidt reduction method and the Leray-Schauder degree.
	\begin{theorem}\label{RedCrit}
		Let $f:\R\to \R$ be a continuously differentiable function. If there exists a constant $\gamma$ such that $f'\left(t\right) \leq \gamma < \lambda_{k+1}$ and $f'\left(\infty\right)\in \left(\lambda_{k},\lambda_{k+1}\right)$,  then there exists at least one solution of the problem (*), say $u_{10}$, and, if isolated, $C_q(J,u_{10})=\delta_{q,k}\R$.
	\end{theorem}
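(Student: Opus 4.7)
The plan is to apply the Lyapunov--Schmidt reduction of Theorem \ref{LS-T} with $X=\bigoplus_{i=1}^{k}E_i$ and $Y=X^{\perp}$ in $H^1(\Omega)$. The hypothesis $f'(t)\leq \gamma<\lambda_{k+1}$ is precisely the assumption of Proposition \ref{RedHypoFun}, so the reduction applies, producing a continuous map $\psi:X\to Y$ and a $C^1$ functional $\widetilde{J}:X\to \R$. Because $\psi(x)$ minimizes $y\mapsto J(x+y)$ on $Y$, one has the pointwise upper bound $\widetilde{J}(x)=J(x+\psi(x))\leq J(x+0)=J(x)$ for every $x\in X$.

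The key step is to prove anti-coercivity of $\widetilde{J}$ on the finite-dimensional space $X$, which has dimension $k$. Write $F(t)=\int_0^t f(s)\,ds$. From $f'(\infty)>\lambda_k$ we pick $\alpha\in(\lambda_k,f'(\infty))$ and a constant $C>0$ such that $F(t)\geq \frac{\alpha}{2}t^2-C$ for every $t\in \R$; this follows by integrating the bound $f(t)/t\geq \alpha$ valid for $|t|$ large. Using the Poincar\'e-type inequality \eqref{PoincareX}, for every $v\in X$ we obtain
\begin{equation*}
J(v)=\frac{1}{2}\|\nabla v\|_{L^2(\Omega)}^2-\int_{\Omega}F(v)\,dx\leq \frac{\lambda_k-\alpha}{2}\|v\|_{L^2(\Omega)}^2+C|\Omega|.
\end{equation*}
Since $\alpha>\lambda_k$ and all norms on the finite-dimensional $X$ are equivalent, $J(v)\to -\infty$ as $\|v\|_{H^1(\Omega)}\to \infty$ in $X$; combined with $\widetilde{J}\leq J|_X$, this gives anti-coercivity of $\widetilde{J}$. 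By continuity on $X\cong \R^k$, $\widetilde{J}$ attains its global maximum at some $x_0\in X$, and by Theorem \ref{LS-T}\ref{CritPoint}, $u_{10}\eqdef x_0+\psi(x_0)$ is a critical point of $J$.

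Finally, assuming $u_{10}$ is isolated, so is $x_0$ as a critical point of $\widetilde{J}$. Being an isolated (global, hence local) maximum on a small neighborhood $U$ of $x_0$ in $X\cong \R^k$, the sublevel set $\{\widetilde{J}<\widetilde{J}(x_0)\}\cap U$ deformation retracts onto $U\setminus\{x_0\}$, so a direct computation yields
\begin{equation*}
C_q(\widetilde{J},x_0)=H_q(U,U\setminus\{x_0\})\cong H_q(\R^k,\R^k\setminus\{0\})=\delta_{q,k}\,\R.
\end{equation*}
To transfer this computation to $J$, we invoke the preservation of critical groups under the Lyapunov--Schmidt reduction (the shifting theorem, see e.g.\ \cite{CHANGKC93}), obtaining $C_q(J,u_{10})=C_q(\widetilde{J},x_0)=\delta_{q,k}\,\R$ for every $q\in \Z$. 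The principal technical hurdle is the anti-coercivity estimate, which hinges essentially on the spectral gap $f'(\infty)>\lambda_k$ coupled with \eqref{PoincareX}; the rest is a standard application of the reduction together with Morse-theoretic bookkeeping.
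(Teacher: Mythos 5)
Your proposal is correct and follows essentially the same route as the paper: the same reduction via Proposition \ref{RedHypoFun}, the same anti-coercivity estimate on $X$ obtained from $F(t)\geq \tfrac{\beta}{2}t^2+\zeta$ with $\beta>\lambda_k$ together with \eqref{PoincareX} and $\widetilde{J}\leq J|_X$, a global maximizer $x_0$ of $\widetilde{J}$ on the $k$-dimensional space $X$, and the transfer $C_q(J,u_{10})=C_q(\widetilde{J},x_0)=\delta_{q,k}\R$ by invariance of critical groups under the reduction. The only cosmetic quibble is your phrasing of the deformation-retract step (the relevant pair is $(\{\widetilde{J}\leq \widetilde{J}(x_0)\}\cap U,\ \{\widetilde{J}\leq \widetilde{J}(x_0)\}\cap U\setminus\{x_0\})=(U,U\setminus\{x_0\})$ near an isolated local maximum), but the conclusion you draw from it is the standard and correct one.
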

	\begin{proof}
		Since $f'\left(\infty\right) \in \left(\lambda_{k},\lambda_{k+1}\right)$, we can safely assume that there exists constants  $\zeta$ and $\beta>\lambda_{k}$ such that 
		\begin{equation*}
			F\left(t\right)\geq \frac{\beta}{2}t^2+\zeta \quad \forall t \in \R.
		\end{equation*}
		Hence, 
		\begin{equation*}
			J\left(u\right)=\frac{1}{2}\int_{\Omega}{|\nabla u|^2}-\int_{\Omega}{F\left(u\right)}\,dx \leq \frac{1}{2}\Vert \nabla u\Vert_{L^2(\Omega)}^2- \frac{\beta}{2}\int_{\Omega}{u^2}\,dx - \zeta |\Omega|.
		\end{equation*}	
		At this point, it is worth recalling that $\Vert \nabla x\Vert_{L^2(\Omega)}^2\leq \lambda_k \Vert x\Vert_{L^2(\Omega)}^2$ for every $x\in X$ and therefore
		\begin{equation*}
			J\left(x\right)\leq \frac{1}{2}\left(1-\frac{\beta}{\lambda_k}\right)\frac{\lambda_{k+1}}{1+\lambda_{k+1}}\Vert{x}\Vert^2_{H^1\left(\Omega\right)}-\zeta|\Omega|\to -\infty \quad \mathrm{as} \quad \Vert x\Vert_{H^1\left(\Omega\right)} \to \infty.
		\end{equation*}			
		 Moreover, since $\widetilde{J}\left(x\right)\leq J\left(x\right)$ we get $\widetilde{J}\left(x\right)\to -\infty$ as $\Vert x\Vert_{H^1\left(\Omega\right)} \to \infty$. Thus, by combining the latter inequality with the condition $\dim X < \infty$,  we deduce the existence of some element $x_{10} \in X$ satisfying
		\begin{equation}\label{GlobMax}
			\widetilde{J}\left(x_0\right)=\max_{x\in X}{\widetilde{J}\left(x\right)}=\max_{x\in X}{J\left(x+\psi\left(x\right)\right)}.
		\end{equation}
		Taking $u_{10}=x_{10}+\psi\left(x_{10}\right)$, we see that $u_{10}$ is a critical point of $J$. It remains to check that $C_q(J,u_{10})=\delta_{q,k}\R$. In fact, as illustrated by \eqref{GlobMax}, $x_{10} \in X$ is a global maximum for $\widetilde{J}$ and consequently $C_q(\widetilde{J},x_{10})=\delta_{q,k}\R$. Finally, in light of the invariance of critical groups under the Lyapunov-Schmidt reduction method (See Liu \cite{L}, Lemma 2.3), we conclude that $C_q(J,u_{10})=\delta_{q,k}\R$. 
	\end{proof}		
	
	We now intend to prove another multiplicity result. To this end, it is necessary to distinguish the solution coming from reduction arguments from the solutions found previously. The following proposition addresses this point.
		
	\begin{proposition} \label{dist} Let $\left\{u_i:1\leq i\leq 10\right\}$ the set of solutions of (*) obtained  so far. Denote by $u_1,u_3,u_5$ the trivial solutions with positive slope and by $u_6,u_7,u_8$ the critical points of mp type. Then, under the assumptions of Theorem \ref{RedCrit}, we have:
		\begin{enumerate}[leftmargin=*,label=(\roman*)]
			\item $u_{10}\neq u_i$ for every solution $u_i$ of the mp type.
			\item \label{distTriv} Let $\alpha \in \left\{u_1,u_3,u_5\right\}$ be such that $J(u_i)\leq J\left(\alpha\right)$ for $i=1,3,5$.  If, in addition, we suppose that $f'\left(\alpha\right)\in \left(\lambda_{\ell},\lambda_{\ell+1}\right)$ with $\ell<k$, then $u_{10} \neq u_i$ for $i=1,3,5$.
		\end{enumerate}			
	\end{proposition}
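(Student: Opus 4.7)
The plan is to distinguish $u_{10}$ from the other previously found solutions by exploiting two facts: its critical groups (as computed in Theorem \ref{RedCrit}) and its variational origin as a global maximizer of the reduced functional $\widetilde{J}$ on the finite-dimensional subspace $X$.

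For part (i), I would compare critical groups. Theorem \ref{RedCrit} yields $C_q(J,u_{10}) = \delta_{q,k}\R$, while Corollary \ref{corollary crticial groups mp} gives $\mathrm{Rank}\,C_q(J,u_j) = \delta_{q,1}$ for every isolated mountain-pass-type critical point $u_j$. Under the standing hypotheses of this section $k \geq 2$, so the two sequences of ranks disagree (at degree $1$ and at degree $k$). Since critical groups are intrinsic invariants of a critical point, this forces $u_{10} \neq u_j$ for each $u_j$ of mountain-pass type.

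For part (ii) I would argue by contradiction. Suppose $u_{10} = u_i$ for some $i\in\{1,3,5\}$. Since $u_i$ is a constant (trivial) solution, it lies in $E_1 \subset X$, and since it is already a critical point of $J$ on $H^1(\Omega)$, the uniqueness clause in Theorem \ref{LS-T} forces $\psi(u_i) = 0$. Writing $u_{10} = x_{10} + \psi(x_{10})$ with $x_{10}\in X$ the global maximizer of $\widetilde{J}$ constructed in the proof of Theorem \ref{RedCrit}, this yields $x_{10} = u_i$ and $\widetilde{J}(u_i) = J(u_i)$. The identical reasoning applied to $\alpha$ gives $\psi(\alpha) = 0$ and $\widetilde{J}(\alpha) = J(\alpha)$. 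Combining global maximality of $x_{10}$ with the hypothesis $J(u_i) \leq J(\alpha)$ one obtains
\begin{equation*}
\widetilde{J}(\alpha) \leq \widetilde{J}(x_{10}) = \widetilde{J}(u_i) = J(u_i) \leq J(\alpha) = \widetilde{J}(\alpha),
\end{equation*}
so equality holds throughout and $\alpha$ itself is a global maximizer of $\widetilde{J}$ on $X$. But Proposition \ref{MaxMinTriv} asserts that $\alpha$ is a \emph{strict} local minimizer of $\widetilde{J}$ on $\bigoplus_{i=\ell+1}^{k} E_i$, which is a nontrivial subspace of $X$ precisely because $\ell < k$. A global maximum of $\widetilde{J}$ on $X$ cannot simultaneously be a strict local minimum along a nontrivial direction in $X$, which is the desired contradiction.

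The main technical point to be careful about is the identification $x_{10} = u_i$ and $\psi(u_i) = 0$ when $u_{10}$ is assumed to coincide with a constant solution. Both follow from the uniqueness assertion in Theorem \ref{LS-T}\ref{CritPoint} once one observes that trivial solutions of (*) are automatically critical points of $J$ on the full space $H^1(\Omega)$, so the pair $(x,y) = (u_i,0)$ solves the defining equation of $\psi$. Once this identification is in place, both (i) and (ii) reduce to the short arguments above.
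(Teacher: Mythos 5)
Your proof is correct and uses the same ingredients as the paper's, but part (ii) is organized differently. Part (i) is identical to the paper's argument: the critical-group comparison $C_q(J,u_{10})=\delta_{q,k}\R$ (Theorem \ref{RedCrit}) against $\delta_{q,1}$ for mountain-pass points (Corollary \ref{corollary crticial groups mp}); your explicit observation that this requires $k\geq 2$ is a point the paper leaves implicit. For part (ii) you argue by contradiction: assuming $u_{10}=u_i$, you identify $x_{10}=u_i$ and $\psi(u_i)=\psi(\alpha)=0$ via the uniqueness clause of Theorem \ref{LS-T} together with the directness of $H^1(\Omega)=X\oplus Y$, conclude that $\alpha$ would be a global maximizer of $\widetilde{J}$ on $X$, and contradict the strict local minimality from Proposition \ref{MaxMinTriv}. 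The paper instead runs a direct strict-inequality chain,
\begin{equation*}
J(u_i)\leq J(\alpha)=\widetilde{J}(\alpha)<\widetilde{J}(\alpha+u)\leq \max_{x\in X}\widetilde{J}(x)=J(u_{10})
\end{equation*}
for small nonzero $u\in\bigoplus_{i=\ell+1}^{k}E_i$, which yields $J(u_i)<J(u_{10})$ outright and thus never needs the identification $x_{10}=u_i$ (only $\psi(\alpha)=0$). Both routes are sound and rely on exactly the same lemmas; the direct chain is marginally shorter, while your contradiction argument has the minor virtue of isolating precisely why a constant solution cannot coincide with the reduced global maximizer.
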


	\begin{proof}
		\mbox{}\par
		\begin{enumerate}[leftmargin=*,label=(\roman*)]
			\item As mentioned above, $C_k(J,u_{10})$ is the only critical group not vanishing identically. In contrast,  for critical points of the mp type we have already proved that $C_q\left(J,u_i\right)=\delta_{q,1}\R$ (See Corollary 2 - Daniel). This makes the  desired distinction.
			\item From Proposition \ref{MaxMinTriv} we know that $\alpha$ is an strict local minimizer of $\widetilde{J}$ on $\bigoplus_{\ell+1}^{k} E_i$, that is, $\widetilde{J}\left(\alpha\right)<\widetilde{J}\left(\alpha+u\right)$ for any $u \in \bigoplus_{i=\ell+1}^{k} E_i$ with $\Vert u\Vert_{H^1\left(\Omega\right)}$ small enough. Thus, 
				\begin{equation*}
					J\left(u_i\right)\leq J\left(\alpha\right)=\widetilde{J}\left(\alpha\right)<\widetilde{J}\left(\alpha+u\right)\leq\max_{x\in X}{\widetilde{J}\left(x\right)}= J\left(x_{10}\right)\leq J\left(u_{10}\right).
				\end{equation*}				
				It follows that $u_{10} \neq u_i$ for every $i\in\left\{1,2,3\right\}$.
		\end{enumerate}				
	\end{proof}
	\begin{remark} A careful look at the inequality $J(u_i)\leq J\left(\alpha\right)$ in Proposition \ref{dist} \ref{distTriv} shows that   

		\begin{equation*}
			J\left(u_i\right)=-F\left(u_i\right)|\Omega|\leq -F\left(\alpha\right)|\Omega|=J\left(\alpha\right),
		\end{equation*}
		which depends entirely on the value of $F\left(t\right)=\int_{0}^{t}{f\left(t\right)}\,dt$, the area under the curve of the nonlinearity $f$.
		 
	\end{remark}
	   In order to establish our last result, observe that $d_{\mathrm{loc}}(\nabla J,u_{10})=\left(-1\right)^k$ according to Lemma \ref{DegCrit}. Having said that, suppose by contradiction that there are no more solutions (keeping the notation as in the previous degree counting). By the excision property of the Leray-Schauder degree we have 
	\begin{equation*}
		\begin{aligned}
			\left(-1\right)^{k}=d\left(\nabla J,B_R,0\right)&=\sum_{i=6}^{8}d(\nabla J,B_{u_i},0)+\sum_{i=1}^{2}d(\nabla J,B_{u_{2i}},0)+\sum_{i=1}^{3}d(\nabla J,B_{u_{2i-1}},0)+d\left(\nabla J,B_{u_{10}},0\right)\\&=-1+(-1)^{k_1}+(-1)^{k_3}+(-1)^{k_5}+\left(-1\right)^{k}.
		\end{aligned}
	\end{equation*}
	which leads to $1=(-1)^{k_1}+(-1)^{k_3}+(-1)^{k_5}$. In other words, if an extra condition is imposed, namely $1\neq \sum_{i=1}^{3}\left(-1\right)^{k_{2i-1}}$, then there exists at least one additional nontrivial solution of (*).

%====================================================================================================================

\bigskip
{\bf Acknowledgments: }The research of the first author was supported by the Grants 13-00863S and 18-03253S of the Grant Agency of the Czech Republic and also by the Project LO1506 of the Ministry of Education, Youth and Sports of the Czech Republic. The research of the remaining authros was partially supported by Colciencias, Fondo Nacional de Financiamiento
para la Ciencia, la Tecnolog ??a y la Innovaci ?on Francisco Jos ?e de Caldas. Project ?Ecuaciones
diferenciales dispersivas y el ??pticas no lineales?, Code 111865842951.
 
%===========================================

\end{document}